\newtheorem{de}{Definition}[section]
\def\d{\delta}
\def\c{\circ}
\def\E{\mathbb{E}}
\def\C{\mathbb{C}ov}
\def\P{\mathbb{P}}
\def\R{\mathbb{R}}
\def\N{\mathbb{N}}
\def\D{\mathrm{d}}
\def\Var{\mathbb{V}ar}
\def\C{\mathbb{C}ov}
\def\Cov{\mathbb{C}ov}
\DeclareMathOperator*{\argmax}{arg\,max}
\DeclareMathOperator*{\argmin}{arg\,min}
\DeclareMathOperator*{\sign}{sign}
 \DeclareMathOperator*{\support}{supp}
\DeclareMathOperator*{\for}{\quad\text{for}\quad}
\newcommand{\mmod}{\ \mathrm{mod}\ }
\newcommand\ind[1]{\mathbb{I}_{#1}}
\shorttitle{Orthogonal Dice} 
\begin{document}

\title{Orthogonal Dice} 

\authorone[Princeton University]{Caleb Deen Bastian} 

\addressone{Program in Applied and Computational Mathematics, Fine Hall, Washington Road, Princeton, NJ. 08544. USA} 
\emailone{cbastian@princeton.edu} 
\authortwo[Princeton University]{Herschel Rabitz}
\addresstwo{Department of Chemistry, Frick Laboratory, Princeton, NJ. 08544. USA}
\authorthree[The Ohio State University]{Grzegorz Rempala}
\addressthree{Department of Mathematics, 231 W 18th Ave, Columbus, OH. 43210. USA}

\begin{abstract}
In this paper, we introduce a family of discrete rectangular uniform distributions on the natural numbers—referred to as {\em orthogonal dice}—characterized by the property that their means equal their variances. These distributions arise naturally in statistics and applied mathematics. We show that the orthogonal dice correspond to solutions of a quadratic Diophantine equation on the naturals, exhibiting divisibility properties tied to their dimensions, generating coprime arithmetic progressions, yielding disjoint partitions of the naturals, and displaying self-similarity. Their associated random counting measures (mixed binomial processes) exhibit interesting structural properties, including orthogonal splitting and convergence to Poisson limits. As a result, the orthogonal dice define canonical stochastic processes that that may be used to construct Brownian and geometric Brownian motions.  More broadly, they serve as Poisson-like building blocks—natural substrates for modeling systems with bounded counts. Furthermore, they induce a trichotomy within the broader class of such distributions, partitioning them into three infinite subfamilies—negative, orthogonal, and positive—according to their mean-variance relationships.
\end{abstract}

\keywords{orthogonal splitting;
Diophantine equation; random counting measure; weak convergence; variance-to-mean ratio}

\ams{60G57}{62E10}    

\section{Introduction} 


The variance-to-mean ratio (VMR)---also known as the Fano factor \citep{fano}, index of dispersion \citep{Barlow}, dispersion index, coefficient of dispersion, relative variance, etc.---is a fundamental quantity in statistics, e.g., count data. The Poisson-type family of distributions---binomial, Poisson, and negative binomial---are well-known probability counting measures that yield respectively  under-dispersed (VMR$<$1), equi-dispered (VMR=1), over-dispersed (VMR$>$1) random variables. Two of these distributions, Poisson and negative binomial, are supported on the naturals $\N$ whereas binomial is supported on the interval of consecutive naturals $\{0,1,\dotsb,n\}$. 

Consider a probability counting measure $\kappa$. Let $K\sim\kappa$ be a count random variable. Most commonly encountered counting laws, e.g., Poisson, geometric, zeta, and so on, are defined on infinite intervals, i.e., $\N_{\ge0}$ or $\N_{\ge1}$. These laws are fundamental and often represent foci of convergence for random sums or other operations, e.g., see Billingsley's book \citep{billingsley}. However, in practical settings, the Poisson assumption can be questioned, for instance for traffic flows, on the principle that $P(K>c)>0$ for any positive constant $c$, such as greater than the number of cars on Earth. Therefore, it is incumbent to think of distributions that natively model bounded counts, where $m\le K\le n$ with $K\in\{m,\dotsb,n\}$. The simplest distribution on $\{m,\dotsb,n\}$ is the maximum entropy discrete rectangular uniform $\kappa_{m,n}$ defined by $\P(K=k)=1/(n-m+1)$ for $k\in\{m,\dotsb,n\}$. We shall be concerned with characterization of the space of such distributions relative to the VMR, rearranged into the difference between the variance and the mean (`variance-to-mean difference'). These distributions are primitives to many other families of counting laws.

Modeling bounded count data, the discrete rectangular uniform distribution on the naturals is an essential law in statistics \citep{discrete}. By construction, these distributions, termed `dice,' yield equally likely outcomes on their support of an interval of consecutive naturals. While dice are mainstays in statistics, it appears the notion of their classification has received less attention. In this article, we characterize the space of dice relative to the notion of mean-variance equality, prominently seen with the Poisson distribution, where dice possessing mean-variance equality are referred to as `orthogonal,' under-dispersed dice are referred to as `negative', and over-dispersed dice as `positive.'  Each class contains infinite members. While simple, the classification scheme based on mean-variance equality is inspired and based on the mean-variance structure of random counting measures, specifically mixed binomial processes, formed by such dice. There, random counting measures whose probability counting measures exhibit mean-variance equality are orthogonal: counts are de-correlated random variables in disjoint subsets. Thus, the `orthogonal dice' immediately possess orthogonal splitting, similar to the Poisson random measure \citep{kingman-poisson-processes,cinlar}. It turns out, the `orthogonal dice' also possess a Poisson limit theorem, where ranging over the dice and thinning yields a Poisson random measure. Dice mean-variance equality yields a quadratic Diophantine equation \citep{diophantine,HW79}, whose solutions are precisely the support parameters of the orthogonal dice. Therefore, the `orthogonal dice' also inherit number theoretic properties with relevance to applied mathematics, including divisibility properties of the sizes of the orthogonal dice, coprime arithmetic progressions, disjoint partitions of the naturals, and self-similarity. This article focuses on the essentials of the orthogonal dice, such as existence, uniqueness, representations, etc. It also gives some illustrative applications to statistical problems of modeling bounded count data, construction of stochastic processes, etc. The objective is to introduce and examine mean-variance equality-based classification of the family of dice through the foundational notion of orthogonal dice and to highlight how this obtained elementary classification of dice based on VMR is non-trivial and useful.

The paper is organized as follows. In Section~\ref{sec:back}, we provide a brief review of the necessary stochastic background, highlighting the convergence of discrete uniform distributions to the Poisson law. Additionally, we delve into the mathematical foundations of this convergence. Next, in Section~\ref{sec:construct}, we establish the existence and uniqueness of an infinite family of discrete uniform random measures. These measures exhibit Poisson convergence with vanishing covariance in disjoint subspaces, which we refer to as `orthogonal dice.' We thoroughly investigate the properties of these orthogonal dice. In Section~\ref{sec:applications} we give some applications of the orthogonal dice, both theoretical and applied. There we study application of `dice' to modeling systems having bounded count data, conduct inference of bounded counts, construct the Wiener and geometric Brownian motion processes from the orthogonal dice, model traffic flows, and construct sparse lattices of naturals from the self-similarity of the orthogonal dice. Lastly in Section~\ref{sec:dis}, we conclude with a concise discussion and present our key findings and implications.

\section{Background}\label{sec:back} We give the necessary background by describing random counting measures, in particular the mixed binomial process, and convergence. 

\subsection{Random counting measures}
Let $(E,\mathscr{E})$ be a measurable space and let $\nu$ be a probability measure on it. Let  $\mathbf{X}=\{X_i\}$ be an independency (collection) of (iid) $E$ valued random variables with law $\nu$. Let $K\sim\kappa$ be a $\N_{\ge 0}$-valued random variable, distributed according to probability counting measure $\kappa$, independent of $\mathbf{X}$, with mean $c>0$ and variance $\delta^2\ge0$. The \emph{mixed binomial process} is identified to the pair of deterministic probability measures $N=(\kappa,\nu)$ on $(E,\mathscr{E})$ through \emph{stone throwing construction} \citep{cinlar,Bastian:2020tb,rm} (STC) as \begin{equation}\label{eq:stc}N(A)=N\ind{A}=\int_{E}N(\D x)\ind{A}(x)\equiv\sum_{i}^{K}\ind{A}(X_i)\quad\text{for}\quad A\in\mathscr{E}\end{equation} where $\ind{A}(x)$ is an indicator function taking the value one if $x\in A$ and zero otherwise. The mixed binomial process is also sometimes referred to as a \emph{proper point process}. Because $\ind{A}(x)=\delta_x(A)$ where $\delta_x$ is the Dirac measure sitting at $x\in E$, STC of the mixed binomial process may be concisely written as \[N=\sum_{i}^K\delta_{X_i}\] with independency $\{K,X_1,X_2,\dotsb\}$. We denote $\mathscr{E}_{\ge0}$ the set of non-negative $\mathscr{E}$-measurable functions. Recall that the law of $N$ is uniquely determined by the \emph{Laplace functional} $L$ \begin{equation}\label{eq:laplace} L(f)=\E e^{-Nf} = \psi(\nu e^{-f})\quad\text{for}\quad f\in\mathscr{E}_{\ge0}\end{equation} where $\psi$ is the \emph{probability generating function} (pgf) of $K$. 

For $f\in\mathscr{E}_{\ge0}$, the \emph{mean} and \emph{variance} of the random variable $Nf$ (if they exist) are \begin{align}\E Nf &= c\nu f\label{eq:mu0}\\\Var Nf &= c\nu f^2 + (\delta^2-c)(\nu f)^2.\label{eq:var0}\end{align} The mean measure $\lambda=c\nu$ is also known as the \emph{intensity measure} of $N$. The variance \eqref{eq:var0} contains two terms depending on the mean and variance of $\kappa$ and the moments of $f$. The first term involves the mean (first moment) of $\kappa$ and the second moment of $f$, whereas the second term depends on the difference between the variance and mean of $\kappa$ and the square of the first moment of $f$. For example, for Poisson, we have $c=\delta^2$ such that $\Var Nf = c\nu f^2$, whereas for Dirac we have $\delta^2=0$ such that $\Var Nf = c(\nu f^2 - (\nu f)^2) = c\Var f$. 

For arbitrary $f,g\in\mathscr{E}_{\ge0}$, we have \emph{covariance} \begin{equation}\label{eq:cov0}\Cov(Nf,Ng)=c\nu(fg)+(\delta^2-c)\nu f\nu g.\end{equation} The mean-covariance structure of $N$ is thus determined by the mean and variance of $\kappa$. The \emph{moments} of $Nf$ (if they exist) can be obtained from the Laplace functional \[ \E(Nf)^n = (-1)^n\lim_{q\downarrow0}\frac{\partial^n}{\partial q^n}L(qf)\quad\text{for}\quad n\in\N_{\ge1}.\] 

Consider measurable subset (`subspace') $A\subseteq E$ with $\nu(A)=a>0$ and the trace random measure defined as $N_A(B)=N(A\cap B)=(N\ind{A})(B)$ on $(E,\mathscr{E})$. Thus $N_A\equiv N\ind{A}$.  We also identify the random measure $N_A$ as a pair $N_A=(N\ind{A},\nu_A)$ where $\nu_A(B)=\nu(A\cap B)/\nu(A)$, according to STC notation $N=(\kappa,\nu)$. The law of the random measure $N_A$ for $A\subseteq E$ with $\nu(A)=a>0$ is given by its Laplace functional \begin{equation}\label{eq:la1} L_A(f) =\psi_A(\nu_A e^{-f})= \psi(a\nu_A e^{-f}+1-a)\quad\text{for}\quad f\in\mathscr{E}_{\ge0}\end{equation} where $\psi_A$ is pgf of $N\ind{A}$ and $\psi$ is pgf of $K=N\ind{E}$. The mass function of the random variable $N(A)=N\ind{A}$ is given by \begin{equation}\label{eq:pdf}\P(N(A)=k)=\psi_A^{(k)}(0) / k! = [t^k]\psi_A(t)=\frac{1}{2\pi i}\oint_{C}\frac{\psi_A(t)}{t^{k+1}}\D t\for k\in\N_{\ge0}\end{equation} where $\psi_A$ \eqref{eq:la1} and the contour integral is over the unit circle $C$. This is stably obtained by computing the residue of $\psi_A(t)/t^{k+1}$ at the point $0$.

\subsection{Coprimality}

Recall that any two positive naturals $a$ and $b$ are \emph{coprime} if their greatest common divisor is 1, indicated by $(a,b)\equiv\gcd(a,b)=1$. Said another way, they share no common \emph{prime} factors. 

\subsection{Review of Convergence}\label{sec:converge} We give a brief review of weak convergence results for the discrete uniform distribution.  The first basic convergence result is that the probability generating function of the discrete uniform distribution uniformly converges to Poisson while ranging over degenerating limiting support and thinning, the Poisson limit theorem (PLT). This is a special case of classical results on independent thinnings by J. Mecke. For convenience we give the proof.

\begin{samepage}
\begin{thm}[PLT]\label{thm:main} Let $\psi_{m,n}(t)$ be the probability generating function for a discrete uniform distribution supported on the set of consecutive integers $m,\dotsc,n$. Also, for any $a\in[0,1]$ let \[\psi_{m,n}^a(t) = \psi_{m,n}(at+1-a).\] If $m/n\rightarrow1$ and $na\rightarrow b>0$ as $m,n\rightarrow\infty$, $a\rightarrow0$, then \[\sup_{t\in[0,1]}|\psi_{m,n}^a(t)-z_b(t)|\rightarrow0\] where $z_b$ is the probability generating function of a random variable $Poisson(b)$.
\end{thm}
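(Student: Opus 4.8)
The plan is to compute $\psi_{m,n}^a(t)$ explicitly and compare it directly to $z_b(t) = e^{b(t-1)}$. First I would write down the probability generating function of the discrete uniform distribution on $\{m, m+1, \dots, n\}$: with $N = n - m + 1$ points, $\psi_{m,n}(t) = \frac{1}{N}\sum_{k=m}^{n} t^k = \frac{t^m(1 - t^{N})}{N(1-t)}$ for $t \neq 1$. Substituting the thinning argument gives $\psi_{m,n}^a(t) = \psi_{m,n}(1 - a(1-t))$. Setting $s = a(1-t)$, so that the argument is $1-s$, I would expand $\psi_{m,n}(1-s) = \frac{1}{N}\sum_{k=m}^n (1-s)^k$. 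Since $t \in [0,1]$ and $a \to 0$, we have $s \in [0,a] \to 0$ uniformly in $t$, so each $(1-s)^k = e^{k\log(1-s)} = e^{-ks(1 + O(s))}$, and the sum is a Riemann-type average of $e^{-ks}$ over $k$ ranging from $m$ to $n$.

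**The main estimate.**

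The key step is to show that this average concentrates: because $m/n \to 1$, all the exponents $ks$ for $k \in [m,n]$ are asymptotically equal to $ns \approx na(1-t) \to b(1-t)$. Concretely I would bound
\[
\left| \frac{1}{N}\sum_{k=m}^n (1-s)^k - e^{-b(1-t)} \right|
\le \left| \frac{1}{N}\sum_{k=m}^n (1-s)^k - (1-s)^n \right| + \left| (1-s)^n - e^{-b(1-t)} \right|.
\]
For the second term, $(1-s)^n = \exp(n\log(1-s)) = \exp(-ns - O(ns^2))$, and $ns = na(1-t) \to b(1-t)$ while $ns^2 \le na \cdot a \to 0$, so this term vanishes uniformly in $t \in [0,1]$. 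For the first term, since $0 \le (1-s)^k \le 1$ is decreasing in $k$, the average over $k \in [m,n]$ differs from the endpoint value $(1-s)^n$ by at most $1 - (1-s)^{m} + \dots$; more carefully, $0 \le \frac{1}{N}\sum_{k=m}^n (1-s)^k - (1-s)^n \le (1-s)^m - (1-s)^n = (1-s)^m(1 - (1-s)^{N-1})$, and the exponent gap is $(N-1)s = (n-m)s \le (n-m)a = (n/a^{-1})\cdot\frac{n-m}{n}\cdot\frac{na}{n}$... cleaner: $(n-m)s \le (n-m)a$, and since $na \to b$ and $m/n \to 1$ we get $(n-m)a = na(1 - m/n) \to b \cdot 0 = 0$. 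Hence $1 - (1-s)^{N-1} \to 0$ uniformly, killing the first term.

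**Assembling the result.**

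Combining these two bounds, $\sup_{t \in [0,1]} |\psi_{m,n}^a(t) - z_b(t)| \to 0$, which is exactly the claim. I would handle the degenerate point $t = 1$ separately (there $\psi_{m,n}^a(1) = 1 = z_b(1)$ trivially), or simply note the closed form extends continuously. The only genuine subtlety — the main obstacle — is making all the error terms uniform in $t \in [0,1]$ simultaneously; this works out because the relevant small parameter is $s = a(1-t) \le a$, so every estimate is controlled by powers of $a$ and by the two hypotheses $na \to b$ and $(n-m)/n \to 0$, independently of $t$. A careful bookkeeping of the $\log(1-s) = -s + O(s^2)$ expansion, verifying that the $O(s^2)$ contributions times $n$ vanish (they are $O(na^2) = O(b a) \to 0$), is the one place where care is needed but no deep idea is required.
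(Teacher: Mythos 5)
Your proof is correct, but it takes a genuinely different route from the paper. The paper argues via the method of moments: it computes the factorial moments of the thinned uniform variable, shows they converge to $b^k$ (the factorial moments of $\mathrm{Poisson}(b)$, which is moment-determinate), and deduces weak convergence, from which convergence of the pgfs is inferred. You instead work directly with the closed form of the pgf, writing $\psi_{m,n}^a(t)=\frac{1}{N}\sum_{k=m}^n(1-s)^k$ with $s=a(1-t)$ and splitting the error into two pieces: the spread of the average over $k\in[m,n]$ around the endpoint value $(1-s)^n$, controlled by $1-(1-s)^{n-m}\le (n-m)a=na(1-m/n)\to b\cdot 0$, and the elementary exponential limit $(1-s)^n\to e^{-b(1-t)}$, controlled by $|n\log(1-s)+ns|\le ns^2\le na^2\to 0$ and $|na-b|\to 0$. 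Your approach is more elementary and more quantitative: every error term is explicitly bounded by a power of $a$ or by $|na-b|$ and $1-m/n$ uniformly in $t\in[0,1]$, so the stated supremum convergence follows immediately, whereas the paper's route through weak convergence requires an additional (unstated) step to upgrade pointwise convergence of pgfs to uniform convergence on $[0,1]$. What the paper's approach buys is brevity and a conceptual link to the moment characterization of the Poisson law; what yours buys is a self-contained estimate that delivers exactly the uniform statement claimed. The one point to make fully precise in your write-up is the inequality $|\log(1-s)+s|\le s^2$ for $s$ small (say $s\le 1/2$), which holds once $a$ is small enough and makes the $O(ns^2)$ bookkeeping airtight; with that, the argument is complete.
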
\end{samepage}
\begin{proof} Since the Poisson variable is uniquely defined by its moments, it suffices to show convergence of all the moments, in order to argue weak convergence from which the above convergence of pgfs will follow (see e.g. Billingsley’s book \citep{billingsley}). Further it suffices to show the convergence of all factorial moments. The factorial moments of the thinned uniform variable $X$ (i.e., the one with pgf $\psi_{m,n}^a(t)$) are given by \[\E[X(X-1)\dotsb(X-k+1)] = \mu_k(m,n,a)=a^k D_k[\psi_{m,n}^a(t)]_{|t=1}\] where $D_k[\cdot]$ denotes the $k$-th derivative. Note that \[\psi_{m,n}(t) = (n-m+1)^{-1}\sum_{i=m}^n t^i\] and \[D_k[\psi_{m,n}^a(t)]_{|t=1} = a^k\frac{n(n-1)\dotsb(n-k+1)\dotsb m(m-1)\dotsb(m-k+1)}{n-m+1}\] that satisfies \[a^km^k\le D_k[\psi_{m,n}^a(t)]_{|t=1}\le a^k n^k\] which implies in view of the assumptions that \[D_k[\psi_{m,n}^a(t)]_{|t=1}\rightarrow b^k.\] But we note that $\{b^k\}_{k=1}^\infty$ is a sequence of factorial moments of a Poisson random variable with mean $b$.
\end{proof}

Let $N_{m,n}=(\kappa_{m,n},\nu)$ be a discrete uniform random measure and recall $\psi_{m,n}$ is the pgf of $\kappa_{m,n}$. Let $N_{m,n}^a=(\kappa_{m,n}^a,\nu_A)$ be the restriction of the die random measure $N_{m,n}$ to subset $A\subseteq E$ with mass $\nu(A)=a>0$ and law $\nu_A(B)=\nu(A\cap B)/\nu(A)$. $\kappa_{m,n}^a$ has pgf $\psi_{m,n}^a(t)=\psi_{m,n}(1-a+at)$ following from binomial thinning of the probability counting measure, i.e., $\psi_{m,n}^a(t) = \psi_{m,n}\circ\psi_a(t)$ where $\psi_p(t)=1-p+pt$ is the pgf of a $\text{Bernoulli}(p)$ random variable. In this sense $N_{m,n}^a$ is the $a$-thinned discrete uniform random measure $N_{m,n}$. The second convergence result which immediately follows is weak convergence of the thinned random counting measures.

\begin{thm}[Random measure PLT]\label{thm:main2}Assume the hypotheses of Theorem~\ref{thm:main}. Then the sequence of thinned discrete uniform random measures $(N_{m,n}^a)$ converges in distribution (converges weakly) to the Poisson random measure $N$, that is, \[\lim_{\substack{m,n\rightarrow\infty\\a\rightarrow0}}\E e^{-N_{m,n}^af}=\E e^{-Nf}\quad\text{for}\quad f\in\mathscr{E}_{\ge0}.\]
\end{thm}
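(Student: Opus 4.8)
The plan is to reduce the weak convergence of random measures to the one-dimensional convergence already established in Theorem~\ref{thm:main}. Recall that, by the stone throwing construction, each $N_{m,n}^a$ is a mixed binomial process whose mixing law $K_{m,n}^a$ has pgf $\psi_{m,n}^a$, and whose spatial law is the thinned uniform $\nu_A$ on the relevant subspace. The Laplace functional \eqref{eq:laplace}--\eqref{eq:la1} gives $\E e^{-N_{m,n}^a f} = \psi_{m,n}^a(\nu_{A} e^{-f})$ for $f\in\mathcal{E}_+$, while the limiting Poisson random measure $N$ has $\E e^{-Nf} = \exp\!\big(-b(1-\nu e^{-f})\big) = z_b(\nu e^{-f})$, using the standard Poisson Laplace functional with mean intensity $b$ and base law $\nu$. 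Thus the whole statement is the assertion that $\psi_{m,n}^a(s) \to z_b(s)$ at the single point $s = \nu_A e^{-f} \in [0,1]$ (since $e^{-f}\le 1$ and $\nu_A$ is a probability measure, the argument indeed lies in the unit interval), together with the convergence $\nu_A \to \nu$ of the base measures.

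First I would fix $f\in\mathcal{E}_+$ and write out $\E e^{-N_{m,n}^a f}$ explicitly via \eqref{eq:la1}, identifying the scalar argument $s_{m,n} = \nu_{A_{m,n}} e^{-f}$ at which $\psi_{m,n}$ (or $\psi_{m,n}^a$) is evaluated. Second, I would invoke Theorem~\ref{thm:main}: because the convergence $\psi_{m,n}^a \to z_b$ there is \emph{uniform} on $[0,1]$, it holds in particular at the point $s_{m,n}$ even though that point moves with $(m,n)$ --- this is exactly why the uniform (rather than merely pointwise) statement in the PLT was worth proving. Third, I would handle the base-measure term: one must also check $\nu_{A_{m,n}} e^{-f} \to \nu e^{-f}$, i.e. that the spatial restriction sets $A_{m,n}$ chosen in the thinning construction are such that $\nu_{A_{m,n}}$ converges weakly to $\nu$ (equivalently, that the renormalized trace measure does not lose mass in the limit); combined with a triangle-inequality split $|\psi_{m,n}^a(s_{m,n}) - z_b(\nu e^{-f})| \le |\psi_{m,n}^a(s_{m,n}) - z_b(s_{m,n})| + |z_b(s_{m,n}) - z_b(\nu e^{-f})|$, the first term vanishes by uniform PLT convergence and the second by continuity of the polynomial-like $z_b$ together with $s_{m,n}\to \nu e^{-f}$.

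Alternatively, and perhaps more cleanly, one can bypass the moving-argument issue entirely by arguing at the level of factorial moments of $N_{m,n}^a f$: formula \eqref{eq:mgf} together with the computation in the proof of Theorem~\ref{thm:main} shows each factorial moment of $N_{m,n}^a\ind{A}$ converges to $b^k$, and then an identical bracketing argument (using $a^k m^k \le \cdot \le a^k n^k$ as before, now weighted against $e^{-f}$) shows the factorial moments of $N_{m,n}^a f$ converge to those of a Poisson-driven integral $Nf$; since the limit law is determined by its moments, weak convergence of $N_{m,n}^a f$ follows, and this is precisely convergence of the Laplace functionals. I would present the Laplace-functional route as the main argument since it is the shortest.

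The main obstacle I expect is the bookkeeping around the spatial law in the thinning: Theorem~\ref{thm:main} is a purely one-dimensional statement about pgfs, so the real content here is the translation dictionary --- recognizing that $\E e^{-N_{m,n}^a f}$ collapses to $\psi_{m,n}^a$ evaluated at a single scalar in $[0,1]$, verifying that scalar stays in the closed unit interval (so uniform convergence on $[0,1]$ applies), and confirming that the thinning sets $A_{m,n}$ are set up so that $\nu_{A_{m,n}} f \to \nu f$ and hence no probability mass escapes. Once the argument is seen to live at a single point, uniformity of the PLT does all the work and nothing further is needed; but getting the limiting object to be genuinely $\E e^{-Nf}$ for the Poisson measure $N$ with the \emph{right} base law $\nu$ --- rather than some Poisson process on a shrunken support --- is the step that requires care.
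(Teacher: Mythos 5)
Your proposal is correct and takes essentially the same route as the paper, whose entire proof is the one-line observation that the claim follows from the Laplace-functional characterization of the law \eqref{eq:laplace}--\eqref{eq:la1} together with Theorem~\ref{thm:main}. Your write-up in fact supplies more detail than the paper does --- in particular the reduction of $\E e^{-N_{m,n}^a f}$ to $\psi_{m,n}^a$ evaluated at a single scalar in $[0,1]$, the point that \emph{uniformity} on $[0,1]$ is what permits the evaluation point to move with $(m,n,a)$, and the flag about identifying the correct limiting base measure --- but the underlying argument is the same.
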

\begin{proof} The result follows from the unique determination of law by the Laplace functional \eqref{eq:laplace} and Theorem~\ref{thm:main}.
\end{proof}

\section{The Orthogonal Dice}\label{sec:construct} In this section we construct a subsequence of dice whose means equal their variances (`orthogonality'). This simple property turns out to confer a deep structure to the orthogonal dice. We briefly discuss orthogonality and move to construction of the orthogonal dice. 
\subsection{Orthogonality}\label{sec:ortho}

The fundmental notion of de-correlation, or orthogonality, of random measures is defined below. 

\begin{de}[Orthogonality] Let $N$ be a random measure on $(E,\mathscr{E})$. It is said to be \emph{orthogonal} if $N(A),\dotsb,N(B)$ are decorrelated for all choices of finitely many disjoint sets $A,\dotsb,B$ in $\mathscr{E}$.  
\end{de}

Orthogonality of the mixed binomial process is encoded by the probability counting measure $\kappa$, where the mean equals the variance.

\begin{prop}[Orthogonality]\label{de:ortho} The mixed binomial process $N=(\kappa,\nu)$ is orthogonal iff $c=\delta^2$, which implies for orthogonal $N$ and arbitrary $f,g\in\mathscr{E}_{\ge0}$ that $\Cov(Nf,Ng)=c\nu(fg)$.\end{prop} 

 A canonical orthogonal random measure is Poisson. The structure of orthogonality yields a splitting property. 

\begin{rem}[Splitting] The proposition conveys an orthogonal splitting property such that the variance decomposes as \begin{align*}\text{for disjoint }A,\dotsb,B \text{ in }\mathscr{E}:&\\\Var (N(A)+\dotsb+ N(B)) &= \Var N(A) +\dotsb + \Var N(B).\end{align*}
\end{rem}

\subsection{Discrete uniform}\label{sec:disc}

The discrete uniform distribution is the object of focus of this article. We briefly give some basic properties of such distributions.

The set of permissible supports for the discrete uniform distribution is defined as \[B\equiv\{(m,n): \text{integers }0\le m\le n\text{ except }m=n=0\}.\] Consider the collection of discrete (rectangular) uniform family of distributions $\mathcal{K}$ \begin{equation}\label{eq:U}\mathcal{K}=\{\kappa_{m,n}=\text{Uniform}\{m,m+1,\dotsb,n-1,n\}: (m,n)\in B\}\end{equation} where $\kappa_{m,n}$ has mean $c=(m+n)/2>0$ and variance $\delta^2=((n-m+1)^2-1)/12\ge0$. This may be thought of in terms of \emph{rolling fair dice}. The number of sides of the die is equal to $n-m+1$ (its `size'). The distribution of random variable $K\sim\kappa_{m,n}$ is given by \[\P(K=k)=\frac{1}{n-m+1}\quad\text{for}\quad k\in\{m,m+1,\dotsb,n-1,n\}\] and the pgf is \begin{equation}\label{eq:pgf}\psi_{m,n}(t) = \frac{t^m-t^{n+1}}{(n-m+1)(1-t)}.\end{equation}

\subsection{Orthogonal dice}

In this section we introduce and characterize the orthogonal dice as those rectangular dice having mean-variance equality. Despite the statistical origin and nature of the objects as distributions, they surprisingly possess a number of number-theoretic properties with relevance to applied mathematics.  

First we define orthogonal dice and random measures. 

\begin{de}[Orthogonal die] We refer to $\kappa_{m,n}$ as an \emph{orthogonal die} if $N=(\kappa_{m,n},\nu)$ is orthogonal. In turn we call $N$ an orthogonal die random measure.\end{de}

For $m=0$ and arbitrary $f,g\in\mathscr{E}_{\ge0}$, we have covariance \[\Cov(Nf,Ng)=\frac{n}{2}\left(\nu(fg)+\frac{(n-4)}{6}\nu f\nu g\right)\] which is orthogonal for $n=4$. This is our first orthogonal die example.

\begin{prop}[Orthogonal die on $\{0,1,2,3,4\}$]$\kappa_{m,n}$ on $\{0,1,\dotsc,n-1,n\}$ is an orthogonal die for $n=4$.\end{prop} 

Note that we if change the support of $\kappa_{m,n}$ to $\{1,\dotsc,n\}$, then we have covariance \[\Cov(Nf,Ng)=\frac{n+1}{2}\left(\nu(fg)+\frac{(n-7)}{6}\nu f\nu g\right)\] so $N$ is orthogonal for $n=7$, i.e. seven-sided die. 

\begin{prop}[Orthogonal die on $\{1,2,3,4,5,6,7\}$]$\kappa_{m,n}$ on $\{1,2,\dotsc,n-1,n\}$ is an orthogonal die for $n=7$.
\end{prop}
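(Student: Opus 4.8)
The plan is to verify directly that the discrete uniform distribution on $\{1,2,\dotsc,n\}$ is orthogonal precisely when $n=7$, and then read off the vanishing-covariance statement from the Disjointedness proposition. Since orthogonality is the condition $c=\d^2$ where $c$ is the mean and $\d^2$ the variance of $\kappa_{mn}$, and we already have the closed forms $c=(m+n)/2$ and $\d^2=((n-m+1)^2-1)/12$ from the definition of $\mathcal{K}$ in \eqref{eq:U}, the whole computation reduces to specializing $m=1$.

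First I would substitute $m=1$: the mean becomes $c=(n+1)/2$ and the variance becomes $\d^2=(n^2-1)/12=(n-1)(n+1)/12$. Setting these equal and cancelling the common factor $(n+1)/2>0$ (valid since $n\ge1$) gives $1=(n-1)/6$, i.e. $n=7$. This shows $\kappa_{1n}$ is an orthogonal die iff $n=7$, which is exactly the claimed value; the seven-sided die $\kappa_{1,7}=\mathrm{Uniform}\{1,\dotsc,7\}$ has $c=\d^2=4$.

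Next I would connect this to the covariance formula already displayed in the text just before the statement. With $m=1$, plugging $c=(n+1)/2$ and $\d^2-c=(n^2-1)/12-(n+1)/2=(n+1)(n-7)/12$ into the general covariance identity \eqref{eq:cov0}, $\Cov(Nf,Ng)=c\,\nu(fg)+(\d^2-c)\,\nu f\,\nu g$, yields $\Cov(Nf,Ng)=\tfrac{n+1}{2}\bigl(\nu(fg)+\tfrac{n-7}{6}\nu f\,\nu g\bigr)$, matching the expression in the excerpt and making the $n=7$ case manifestly orthogonal since the second term vanishes. By the Disjointedness proposition, orthogonality of $N=(\kappa_{1,7},\nu)$ is then equivalent to $\Cov(Nf,Ng)=0$ for all disjoint $f,g\in\mathcal{E}_+$, completing the argument.

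There is essentially no obstacle here: the result is a one-line arithmetic specialization of the moment formulas for the discrete uniform distribution, entirely parallel to the preceding proposition for the support $\{0,1,\dotsc,4\}$. The only point requiring a word of care is the cancellation of $(n+1)/2$, which is legitimate because $n\ge1$ on the relevant part of $A_\circ$, so $(n+1)/2\neq0$; I would state this explicitly to make the ``iff'' clean, and note that the same computation shows $n=7$ is the unique solution, not merely a solution.
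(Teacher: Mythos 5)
Your proposal is correct and follows the same route as the paper, which simply records the covariance $\Cov(Nf,Ng)=\tfrac{n+1}{2}\bigl(\nu(fg)+\tfrac{n-7}{6}\nu f\,\nu g\bigr)$ for $m=1$ and reads off orthogonality at $n=7$. Your added remarks on the legitimacy of cancelling $(n+1)/2$ and on uniqueness of $n=7$ are fine but not a departure from the paper's argument.
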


We find there are infinitely many such orthogonal dice. This is the foundational theorem of the article. 

\begin{samepage}
\begin{thm}[Existence and uniqueness of orthogonal rectangular dice]\label{thm:1} Orthogonal dice $\kappa_{m,n}$ with support $\{m,m+1\dotsc,n-1,n\}$ for integers $0\le m<n$ are completely enumerated by the collection \begin{equation}\label{eq:K}\mathcal{S}=\{\kappa_{m,n}: (m,n)\in A\}\subset\mathcal{K}\end{equation} where \begin{equation}\label{eq:A}A=\{(m,n): k=1,2,4,5,7,8,\dotsb, m=(k^2-1)/3, n=2k+m+2\}\end{equation} with $|\mathcal{S}|=\infty$ and $m/n\rightarrow1$ as $m,n\rightarrow\infty$. Moreover, for each $\kappa_{m,n}\in\mathcal{S}$, the integer $n-m+1$ is a product of one or more primes each having value equal to or greater than five. \end{thm} 
\end{samepage}
\begin{proof} 
Recall that for arbitrary $f,g\in\mathscr{E}_{\ge0}$, we have covariance $\Cov(Nf,Ng)=c\nu(fg)+(\delta^2-c)\nu f\nu g$. Orthogonality requires $c=(m+n)/2=\delta^2=((n-m+1)^2-1)/12$, so we solve $\delta^2-c=0$ for $n$, giving solutions $n=m+2\pm 2 \sqrt{3 m+1}$, where $m/n\rightarrow1$ as $m,n\rightarrow\infty$. We take the positive root so that $m<n$ and denote the solution $n=h(m)$. $n$ is an integer whenever $3m+1=k^2$ for some integer $k\ge1$. Then this is equivalent to $m=(k^2-1)/3$ being an integer for integer $k$, which is the case for all integers $k\ge1$ except multiples of three. To see this, we recall that the integers $k^2$ and $k^2-1$ are coprime, so if the factorization of $k^2$ contains 3, then the factorization of $k^2-1$ does not; conversely, and relevant for our case, if the factorization of $k^2-1$ contains 3, then the factorization of $k^2$ does not. $k^2$ has the same prime factors as $k$, so the factorization of $k$ cannot contain three. Hence $k$ takes values of all positive integers except multiples of three, $k\in\N_{\ge1}\setminus 3\N_{\ge1}$. We put $n=h(m)=m+2+2 \sqrt{3 m+1}=m+2k+2$. Notice that $n-m+1=2k+3$ enumerates all odd integers starting with five except multiples of three. Therefore the factorization of $n-m+1$ may contain any prime except 2 or 3, i.e. the primes starting with 5. \end{proof}

The first 15 orthogonal dice are shown below in Table~\ref{tab:4}.

\begin{table}[h!]
\begin{center}
\begin{tabular}{cccc}
\toprule
$m$ & $n$ &$c$ & $n-m+1$ \\\midrule
0 &4 &2 &   5\\
1&7 & 4&   7\\
5&15 &10& 11\\ 
8&20 &14& 13\\
16&32 &24& 17\\
21&39 &30& 19\\
33&55 &44& 23\\
40&64 &52& 25\\
56&84 &70& 29\\
65&95 &80& 31\\
85&119 &102& 35\\
96&132 &114 & 37\\
120&160 &140& 41\\
133&175 &154& 43\\
161&207 &184& 47\\
\bottomrule
\end{tabular}\caption{First 15 orthogonal dice}\label{tab:4}
\end{center}
\end{table}

\FloatBarrier

We have the index or canonical parameter of the orthogonal dice. 
\begin{rem}[Index / canonical parameter] The set $\mathcal{S}$ is indexed by $\mathcal{I}=\N_{\ge1}\setminus 3\N_{\ge1}$. This is the canonical parameter of the orthogonal dice. The index $k\in\mathcal{I}$ has position $\lceil\frac{2}{3}k\rceil$.
\end{rem}

The most important object of the orthogonal dice is their Diophantine equation. Recall $A$ \eqref{eq:A} from Theorem~\ref{thm:1}. 

\begin{thm}[Diophantine]\label{thm:equation} \[m^2-2mn+n^2-8m-4n=0\quad\text{iff}\quad (m,n)\in A.\]\end{thm}

We give a key result that the union of the supports of the orthogonal dice forms the natural numbers and induces partitions of $\N_{\ge0}$ and $\N_{\ge1}$ respectively. This has natural applications in applied mathematics in the study of partitions and their properties. \begin{samepage}
\begin{cor}[Naturals]\label{cor:naturals}\[\bigcup_{\kappa_{m,n}\in\mathcal{S}}\{m,\dotsb,n\} = \bigcup_{\substack{\kappa_{m_k,n_k}\in\mathcal{S}:\\k=1+3j, j\ge0}}\{m_k,\dotsb,n_k\} =  \N_{\ge0}\] and \[\bigcup_{\kappa_{m_k,n_k}\in\mathcal{S}: k > 1}\{m_k,\dotsb,n_k\} = \bigcup_{\substack{\kappa_{m_k,n_k}\in\mathcal{S}:\\k=2+3j, j\ge0}}\{m_k,\dotsb,n_k\} =  \N_{\ge1}.\]\end{cor}
\end{samepage}
\begin{proof} The first result for $\N_{\ge0}$ follows from putting $m(k) = (k^2-1)/3$ and $n(k)=2k+m(k)+2$ and noting that $m(k+1)<m(k+2)<n(k)$ for all $k\ge1$, so that starting with $m(1)=0$ and $n(1)=4$, all the integers are enumerated at least once for the given sequence of $k$. The second result for $\N_{\ge0}$ follows from $m(k+3)=n(k)+1$ for all $k\ge1$. The results for $\N_{\ge1}$ similarly follow.
\end{proof} 

We give a few remarks about the orthogonal dice.

\begin{rem}[Poisson convergence]\label{re:plt}The family of discrete uniform support parameters $A$ satisfies the hypothesis $m/n\rightarrow1$ as $m,n\rightarrow\infty$ of Theorem~\ref{thm:main}, so by Theorem~\ref{thm:main2} ranging over this family and thinning retrieves the Poisson law. An illustration of this convergence is depicted in Figure~\ref{fig:convergence}, depicting the mass functions of Poisson and a sequence of $a$-thinned orthogonal dice with mean $114$.  

\begin{figure}[h]
\centering
\includegraphics[width=4.5in]{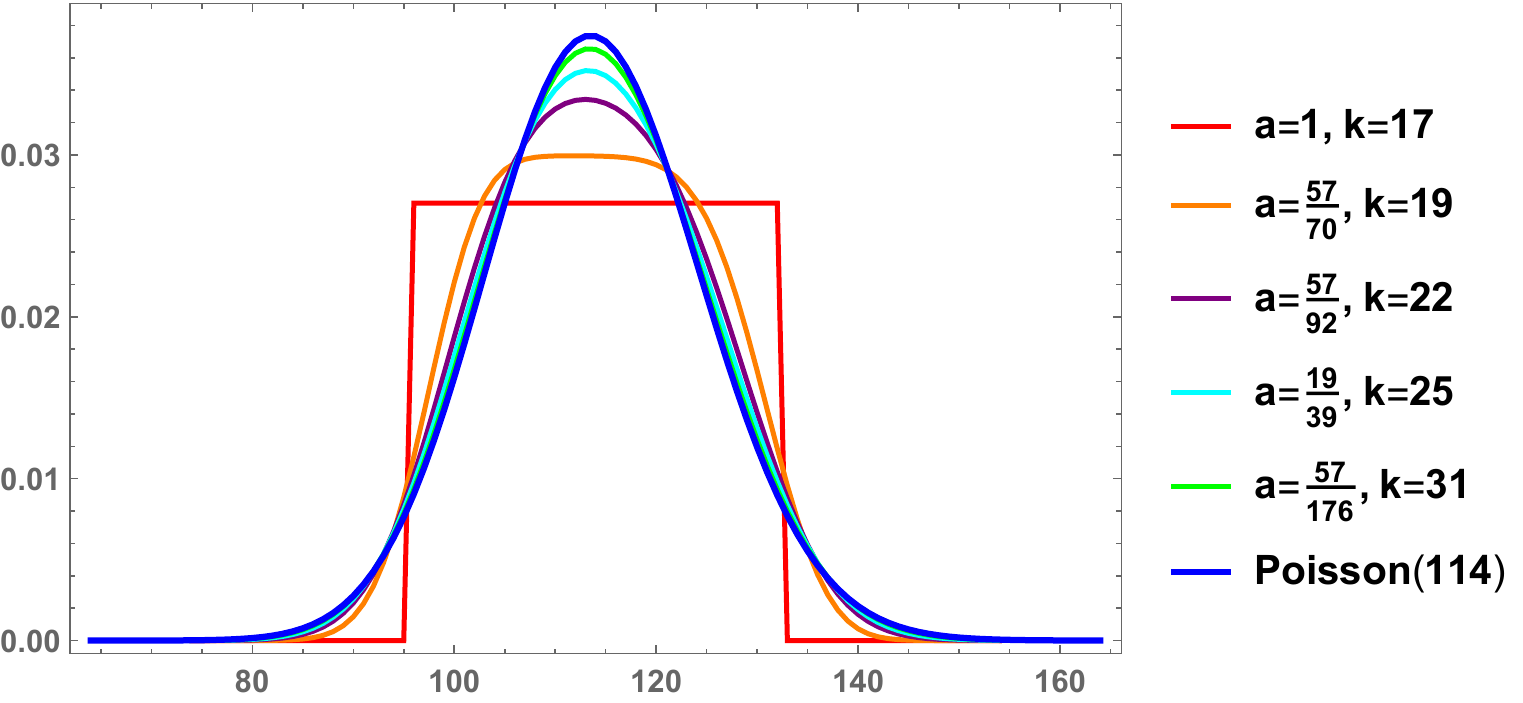}
\caption{Convergence: Mass functions of Poisson and a sequence of $a$-thinned orthogonal dice with mean $114$}\label{fig:convergence}
\end{figure}
\FloatBarrier
\end{rem}

\begin{rem}[Asymptotics]\label{re:asy}The number of sides $n-m+1=2k+3$ scales linearly $\Theta(k)$, whereas the mean $c=\frac{1}{3}(k+1)(k+2)$ scales quadratically $\Theta(k^2)$. Hence as $k$ gets large we have $n-m+1=2k+3\ll \frac{1}{3}(k+1)(k+2)=c$.
\end{rem}
\begin{rem}[Nearest die]\label{re:near} The mean is given by $c(k)=\frac{1}{3}(k+1)(k+2)$ for $k\in\mathcal{I}$. The orthogonal die $\kappa_{m,n}$ with mean closest to some given mean $c^*$ is located at index \[k^*=\argmin_{k\in\mathcal{I}}|c(k)-c^*|\approx\frac{1}{2}(\sqrt{1+12c^*}-3).\]
\end{rem}

\subsubsection{Statistical properties}
A key consequence of Theorem~\ref{thm:1} is a measurable disjoint partition of $\mathcal{K}$ into infinite orthogonal, positively correlated, and negatively correlated components. 

\begin{samepage}
\begin{cor}[Disjoint partition of $\mathcal{K}$]\label{cor:part} $\mathcal{K}$ \eqref{eq:U} may be partitioned into three disjoint infinite subsets \[\mathcal{K} = \mathcal{S}\cup\mathcal{C}_+\cup\mathcal{C}_-\] where $\mathcal{S}$ \eqref{eq:K} and \begin{align*}\mathcal{C}_+ &= \{\kappa_{m,n}: \text{integers }m\ge0, n>m+2+2\sqrt{3m+1}\}\\\mathcal{C}_- &= \{\kappa_{m,n}: \text{integers }m\ge0, 0<n<m+2+2\sqrt{3m+1}\}.\end{align*} The notation $\mathcal{C}_-$ and $\mathcal{C}_+$ indicates the sign of the covariance of the corresponding random measures $N=(\kappa_{m,n},\nu)$ for disjoint functions.  Elements of $\mathcal{C}_-$ are called negative dice, and elements of $\mathcal{C}_+$ are called positive dice. 
\end{cor}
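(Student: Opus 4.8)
The plan is to verify that the three sets $\mathcal{S}$, $\mathcal{C}_+$, $\mathcal{C}_-$ are pairwise disjoint, that their union is all of $\mathcal{K}$, that each is infinite, and that the subscript labels match the sign of $\Cov(Nf,Ng)$ for disjoint $f,g$. The key organizing fact is the covariance formula \eqref{eq:cov0}: for $N=(\kappa_{mn},\nu)$ and disjoint $f,g$ we have $\Cov(Nf,Ng)=(\delta^2-c)\,\nu f\,\nu g$ (the $c\nu(fg)$ term vanishes by disjointedness), so the sign of the covariance is governed entirely by the sign of $\delta^2-c$, with $c=(m+n)/2$ and $\delta^2=((n-m+1)^2-1)/12$. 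Since $\nu f,\nu g\ge0$ and can be taken strictly positive (choose $A,B$ disjoint with $\nu(A),\nu(B)>0$, as in the discussion after the Disjointedness proposition), the claim reduces to a purely arithmetic trichotomy.

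First I would record the equivalence: treating $n$ as a real variable with $m\ge0$ fixed, $\delta^2-c$ is a quadratic in $n$ with positive leading coefficient whose larger root is exactly $n=m+2+2\sqrt{3m+1}=:h(m)$ (this is the positive root already computed in the proof of Theorem~\ref{thm:1}; the smaller root is negative and hence irrelevant once $n\ge m\ge0$). Therefore, for integers $0\le m<n$, $\delta^2-c>0$ iff $n>h(m)$, $\delta^2-c<0$ iff $n<h(m)$, and $\delta^2-c=0$ iff $n=h(m)$. This immediately gives the set-theoretic decomposition $\mathcal{K}=\mathcal{S}\cup\mathcal{C}_+\cup\mathcal{C}_-$ with the three pieces pairwise disjoint, once one notes that $\mathcal{S}$ is precisely $\{\kappa_{mn}:n=h(m)\}$ — this is the content of Theorem~\ref{thm:1}, which identified the integer solutions of $\delta^2=c$ as exactly the pairs $(m,n)$ with $m=(k^2-1)/3$, $n=2k+m+2$ for $k\in\N_{>0}\setminus 3\N_{>0}$ — together with the observation that every $\kappa_{mn}\in\mathcal{K}$ either has $m=n$ (the degenerate Dirac members, which have $\delta^2=0<c$ and so lie in $\mathcal{C}_-$, consistent with the ``except $m=n=0$'' caveat excluding the inadmissible pair) or has $m<n$ and falls into exactly one of the three cases above.

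Next I would establish that each of the three subsets is infinite. For $\mathcal{S}$ this is already stated in Theorem~\ref{thm:1} ($|\mathcal{S}|=\infty$). For $\mathcal{C}_+$ it suffices to exhibit, for each fixed $m\ge0$, at least one (indeed infinitely many) integers $n>h(m)$ — any $n\ge \lceil h(m)\rceil+1$ works — so $\mathcal{C}_+$ is infinite. For $\mathcal{C}_-$ one can take $n=m+1$ for all $m\ge1$: then $n-m+1=2$, so $\delta^2=(4-1)/12=1/4<(2m+1)/2=c$, hence $n<h(m)$ and $\kappa_{m,m+1}\in\mathcal{C}_-$; these are distinct for distinct $m$, so $\mathcal{C}_-$ is infinite. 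Finally, reading the sign of $\Cov(Nf,Ng)=(\delta^2-c)\nu f\nu g$ off the trichotomy shows the labels are correct: on $\mathcal{C}_+$ the covariance is $\ge0$ and strictly positive for $\nu f,\nu g>0$; on $\mathcal{C}_-$ it is $\le0$ and strictly negative for $\nu f,\nu g>0$; on $\mathcal{S}$ it is $0$ (matching the Disjointedness proposition). I do not anticipate a genuine obstacle here: the only point requiring care is the bookkeeping at the boundary — correctly placing the degenerate members $m=n$ into $\mathcal{C}_-$ and confirming the inadmissible pair $m=n=0$ is excluded from all three sets by construction — and making sure the ``$h(m)$ is the relevant root'' claim is justified by the sign of the leading coefficient and the non-negativity of $n$, rather than asserted.
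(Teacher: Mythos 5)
Your proof is correct and follows essentially the same route as the paper's (which is just a terser statement of the same trichotomy in the sign of $\delta^2-c$ as a function of $n$ relative to the root $h(m)=m+2+2\sqrt{3m+1}$); you additionally verify the infinitude of each piece and the placement of the degenerate members, which the paper asserts without detail. One small inaccuracy: the smaller root $m+2-2\sqrt{3m+1}$ is not always negative (it equals $4$ at $m=16$, for instance), but your conclusion still stands because that root is always $\le m$, with equality only at $m=0$, so it never interferes for $(m,n)\in A_\circ$.
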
 
\end{samepage}
\begin{proof} Apply Theorem~\ref{thm:1} to retrieve $\mathcal{S}$, where $\delta^2-c=0$ for $n=m+2+2\sqrt{3m+1}$ for the $m$ there. Then for every integer $m\ge0$ we have $\delta^2-c<0$ for integers $0<n<m+2+2\sqrt{3m+1}$, and $\delta^2-c>0$ for integers $n>m+2+2\sqrt{3m+1}$. \end{proof}

Now that we have identified $\mathcal{S}$, we give statistical properties of $\mathcal{S}$. Towards this, the following proposition gives the mean, variance, and covariance of orthogonal die random measures. This is restatement of the statistics of the mixed binomial process. Then we give a couple remarks.

\begin{samepage}
\begin{prop}[Campbell]\label{prop:orthogonaldie} Let $N=(\kappa_{m,n},\nu)$ be an orthogonal die random measure on $(E,\mathscr{E})$ and put $c=(m+n)/2$.  Then the Laplace functional of $N$ is \[L(f) = \E e^{-Nf} = \psi_{m,n}(\nu e^{-f})\for f\in\mathscr{E}_{\ge0},\] the mean and variance of $Nf$ (Campbell's formulas) for $f\in\mathscr{E}_{\ge0}$ are \begin{align}\E Nf &= c\nu f\label{eq:mu}\\\Var Nf &= c\nu f^2,\label{eq:var}\end{align} and the covariance of $Nf$ and $Ng$ for arbitrary $f,g\in\mathscr{E}_{\ge0}$ is \begin{equation}\label{eq:cov} \Cov(Nf,Ng)=c\nu(fg).\end{equation} 
\end{prop}\end{samepage}\begin{proof}These follow from the formulas of the mixed binomial process and the definition of orthogonality.\end{proof}
\begin{rem}[No distributional closure of dice]\label{re:thin} The law of the restriction is not uniform for $0<a<1$, i.e. uniform random measures are not closed under thinning.
\end{rem} 

\begin{rem}[Mass function of $N\ind{A}$]\label{eq:density} The mass function of $N\ind{A}$ is given by \[\P(N\ind{A}=k)=\psi_A^{(k)}(0) / k! =[t^k]\psi_A(t)= \frac{1}{2\pi i}\oint_{C}\frac{\psi_A(t)}{t^{k+1}}\D t\for k\in\N_{\ge0}\] where $\psi_A(t)=\psi_{m,n}(1-a+at)$ \eqref{eq:la1} with $\support(N\ind{A})=\{0,1,\dotsb,n\}$ for $A\subset E$ and $\support(N\ind{E})=\{m,\dotsb,n\}$. \end{rem}

We give a remark on quadratic integer sequences and a result on orthogonal triangular probability counting measures.  

\begin{rem}[Quadratic sequences] The orthogonal dice support minimum sequence $m_k=(k^2-1)/3$ is quadratic. Other integer sequences with quadratic growth are the triangular numbers $T_k=k(k+1)/2$ with generating function $\varphi_T(t) = t/(1-t)^3$ and the summatory totient function $\Phi(k)\equiv\sum_{n\le k}\phi(n)\sim 3k^2/\pi^2$.  
\end{rem}

Let $\circledast$ denote the convolution operation of measures on $\N_{\ge0}$, i.e., $\nu\circledast\mu$ defined by \[(\nu\circledast\mu)f = \sum_{x\ge0}\nu\{x\}\sum_{y\ge0}\mu\{y\}f(x+y),\] naturally extending to $k$-fold convolutions. 

 \begin{prop}[Orthogonal triangular] Every orthogonal die law $\kappa_{m,n}\in\mathcal{S}$ with pgf $\psi_{m,n}$ is associated to an orthogonal triangular law $\triangle_{m,n}=\kappa_{m,n}\circledast\kappa_{m,n}$ with pgf $\psi_{m,n}^2$ and rectangular support $\{2m,\dotsb,2n\}$.\end{prop}
 
 More generally we have that the orthogonal dice generate orthogonal probability measures at all orders. Let $\kappa^k=\kappa\circledast\dotsb\circledast\kappa$ denote the $k$-fold convolution of the probability counting measure $\kappa$.
 \begin{thm}[Orthogonal sums]\label{thm:sums} Every orthogonal die $\kappa_{m,n}\in\mathcal{S}$ with pgf $\psi_{m,n}$ is associated through $k$-fold convolutions to an infinite family of orthogonal probability counting measures $\{\kappa^{k}_{m,n}: k\ge1\}$ with pgfs $\{\psi_{m,n}^k: k\ge1\}$ and rectangular supports $\{\{km,\dotsb,kn\}: k\ge1\}$.
 \end{thm}

\subsubsection{Applied mathematics properties}
Now we move into some applied mathematical properties of the orthogonal dice. 

First, the scalings of the sizes and supports of the orthogonal dice  are non-trivial. For example, they satisfy the conditions for the refined Gaussian conjecture of Montgomery and Soundararajan in number theory \citep{MS}, implying the number of primes in the parts of the partitions follows an under-dispersed Gaussian law, instead of a Poisson law.
\begin{prop}[Montgomery and Soundararajan Scaling]\label{prop:ms}
Let $(m_k,n_k)\in A$ for each $k\in \N_{\geq 1}\setminus 3\N_{\geq 1}$. Then, as $k\to+\infty$,
\[\frac{n_k-m_k+1}{\log(m_k)}\rightarrow+\infty\qquad\text{and}\qquad \frac{n_k-m_k+1}
{m_k}\rightarrow0.\]
\end{prop}

The size of a die at index $k\in\mathcal{I}$ is $l_k=n_k-m_k+1=2k+3$. Consider a process where, starting from the origin $(0,0)$, we cycle right-up-left-down with increments equal to $l_k$. The first coordinate has location $(0,l_1,l_1,l_1-l_4,l_1-l_4,l_1-l_4+l_7,\dotsb)=(0,5,5,-6,-6,11,11,\dotsb)$. The second coordinate has location $(0,0,l_2,l_2,l_2-l_5,l_2-l_5,l_2-l_5+l_8,l_2-l_5+l_8,\dotsb)=(0,0,7,7,-6,-6,13,13,\dotsb)$. Because the horizontal locations are assigned to the dice at odd positions and the vertical correspond to the dice at even positions, we obtain expressions for the horizontal and vertical locations \[\sum_{j\le k}(-1)^j(5+6j)=1+(-1)^k(4+3k),\quad \sum_{j\le k}(-1)^j(7+6j)=2+(-1)^k(5+3k).\] Hence the locations expand at the same linear rate in $k$. This process produces a `spiral' depicted below in Figure~\ref{fig:spiral} for the first 100 dice, where each successive point of the process is joined by a line. Because the indices are repeated and given the relationship between orthogonal die index in $k\in\mathcal{I}$ and position as $j=\lceil \frac{2}{3}k\rceil$, the spiral resides within square $[-3j/2,3j/2]^2$, evaluating to $[-150,150]^2$ for the first $j=100$ dice. The length of the line is $\frac{3}{2}j(2+j)\Rightarrow 15\,300$ for $j=100$. 

\begin{figure}[h]
\centering
\includegraphics[width=3.5in]{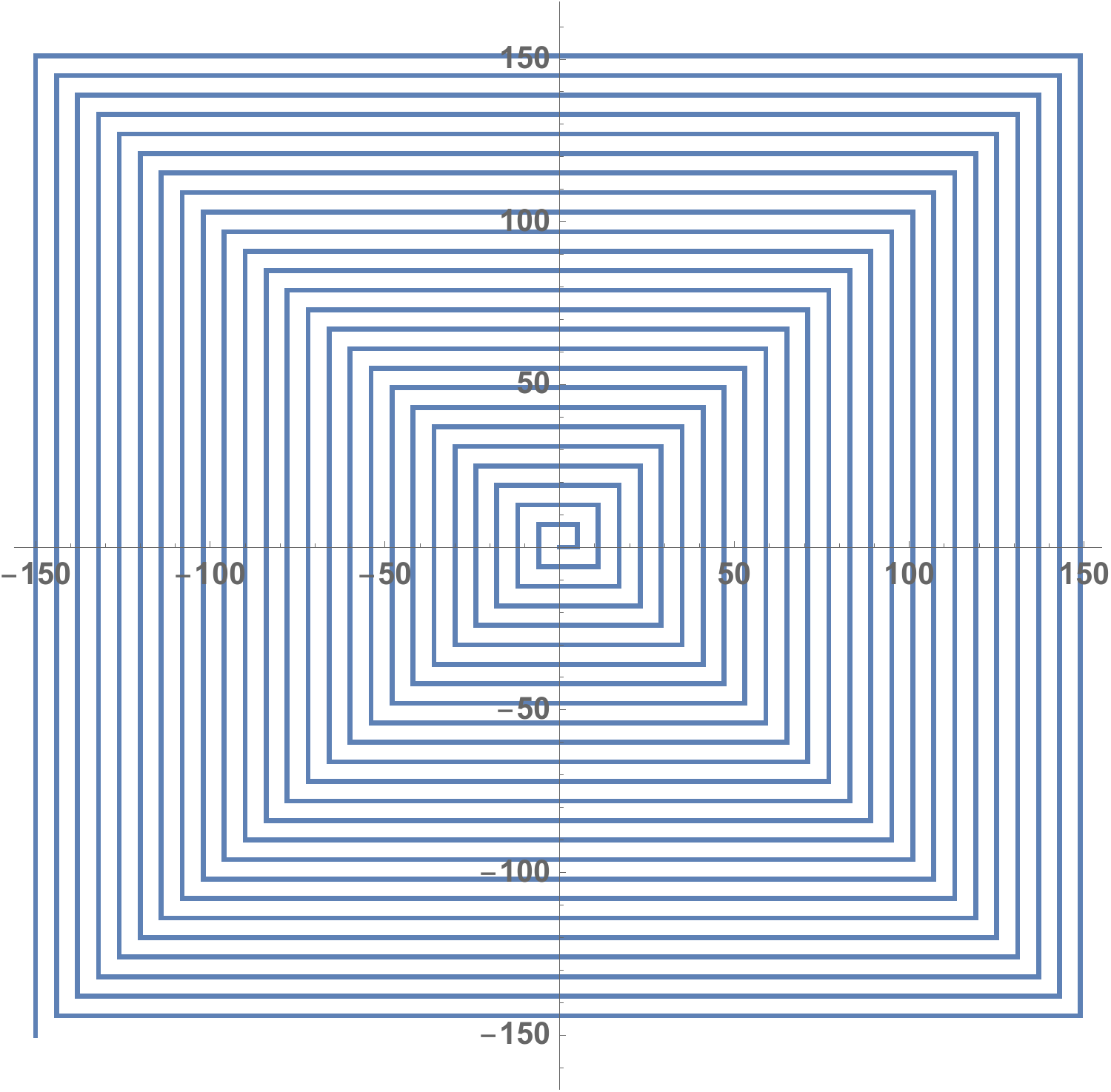}
\caption{`Spiral' produced by the first 100 orthogonal dice}\label{fig:spiral}
\end{figure}
\FloatBarrier

As a preliminary to establishing a key corollary of Theorem~\ref{thm:1} on co-prime arithmetic progressions, we give a result on coprimality. 

\begin{lem}[Coprime]\label{lem:coprime} \[\gcd(p,2p-1)=\gcd(p,2p+1)=1\for p\in\N_{\ge1}\] and \[\gcd(p,\tfrac{1}{2}p-1)=\gcd(p,\tfrac{1}{2}p+1)=1\for p\in4\N_{\ge1}.\]
\end{lem}\begin{proof}Recall that for coprime $a$ and $b$ such that $a+b=c$, then $\gcd(a,b,c)=\gcd(a,b)=\gcd(a,c)=\gcd(b,c)=1$. Then $\gcd(p,p\pm1)=1$ implies $\gcd(p,p+p\pm1)=1$. Putting $p=4x$ and $z=2x\pm1$, consecutive odd integers are coprime $\gcd(z,z\pm2)=1$ such that $\gcd(z+z\pm 2,z)=1$, giving $\gcd(p,\frac{1}{2}p\pm1)=\gcd(4x,2x\pm1)=\gcd(2x+1+2x-1,2x\pm1)=\gcd(z+z\pm 2,z)=1$.\end{proof}

Another key corollary of the main theorem is given, showing that the support of each orthogonal die may be partitioned using the information of the next orthogonal die, and the sizes of the partitions are coprime. This follows from the structure of their arithmetic progressions and Lemma~\ref{lem:coprime}. 
 
 \begin{samepage}
\begin{cor}[Coprimality]\label{cor:coprime}For $k\in\mathcal{I}$, let $m_k\equiv(k^2-1)/3$ and $n_k\equiv m_k+2k+2$, indicate $m_{k+1}$ as the support minimum of the next orthogonal die from $k$, and partition the support \begin{align*}\{m_k,\dotsb,n_k\} &=A_k\cup B_k\equiv \{m_k,\dotsb,m_{k+1}\}\cup\{m_{k+1}+1,\dotsb,n_k\} \\&=C_k\cup D_k\equiv \{m_k,\dotsb,m_{k+1}-1\}\cup\{m_{k+1},\dotsb,n_k\}.\end{align*} Then the following pairs of naturals are coprime \[(i)\quad |A_k| \text{ and }|B_k|,\quad\quad (ii)\quad |C_k|\text{ and }|D_k|,\quad\quad(iii)\quad|A_k|\text{ and }|C_k|,\quad\text{and}\quad(iv)\quad |B_k|\text{ and } |D_k|;\] the sequences $A=(|A_k|)$, $B=(|B_k|)$, $C=(|C_k|)$, and $D=(|D_k|)$ are non-repeating in $k$; and \[\{|C_1|,|B_2|,|C_4|,|B_5|,\dotsb,|C_k|,|B_{k+1}|,\dotsb\}=\N_{\ge1}.\] \end{cor}
\end{samepage}
\begin{proof} 
The third and fourth coprimality claims are trivially true. We focus on the first two. 

In the first, we have \begin{align*}|A_k| &= m_{k+1}-m_k+1 \\ |B_k| &= n_k-m_{k+1}. \end{align*} Take $k_1(j) = 1 + 3j$ and $k_2(j)=2+3j$ for $j\ge0$ such that \begin{align*}|A_{k_1(j)}| &= (k_2^2(j)-1)/3 - (k_1^2(j)-1)/3 + 1 =2+2j\\|B_{k_1(j)}|&= (k_1^2(j)-1)/3 + 2k_1(j) + 2 - (k_2^2(j)-1)/3 = 3+4j\end{align*} Then by Lemma~\ref{lem:coprime}  $\gcd(2+2j,3+4j)=\gcd(2+2j,2*(2+2j)-1)=1$ for $j\ge0$ so that the progressions are coprime. 

Now we do the other case \begin{align*}|A_{k_2(j)}| &= (k_1^2(j+1)-1)/3 - (k_2^2(j)-1)/3 + 1 =5+4j\\|B_{k_2(j)}|&= (k_2^2(j)-1)/3 + 2k_2(j) + 2 - (k_1^2(j+1)-1)/3 =2+2j \end{align*} Then $\gcd(5+4j,2+2j)=\gcd(2+2j,2*(2+2j)+1)=1$ for $j\ge0$ and the progressions are coprime.

Next we have for $C_k$ and $D_k$ sizes \begin{align*}|C_k| &= m_{k+1}-m_k \\ |D_k| &= n_k-m_{k+1}+1. \end{align*} In terms of $k_1$ and $k_2$ and $j$, these are $|C_{k_1(j)}| = 1+2j $ and $|D_{k_1(j)}| = 4+4j$. Then $\gcd(1+2j,4+4j)=\gcd(\frac{1}{2}(4+4j)-1,4+4j)=1$ for $j\ge0$ and the progressions are coprime. 

For the last case, we have $|C_{k_2(j)}| = 4+4j$ and $|D_{k_2(j)}| = 3+2j$. Then $\gcd(3+2j,4+4j)=\gcd(\frac{1}{2}(4+4j)+1,4+4j)=1$ for $j\ge0$ and the progessions are coprime.

The last claim follows as $\N_{\ge1}=\{1+2j: j\ge0\}\cup\{2+2j:j\ge0\}$.
\end{proof} 

In Table~\ref{tab:size} we show the values of the sizes of the partitions for the first 15 orthogonal dice.
\begin{table}[h!]
\begin{center}
\begin{tabular}{ccc|cc|cc}
\toprule
$m$ & $n$ & $n-m+1$ &$|A|$ & $|B|$ & $|C|$ & $|D|$ \\\midrule
0 &4 & 5&2 &3& 1& 4\\
1&7  & 7& 5 &2& 4& 3\\
5&15 & 11 &4 &7 &3 &8\\ 
8&20 & 13 &9 &4 &8 &5\\
16&32 & 17 &6 &11& 5& 12\\
21&39 & 19 &13& 6& 12& 7\\
33&55 & 23 &8 &15& 7& 16\\
40&64 & 25 &17 &8& 16& 9\\
56&84 & 29 &10 &19& 9& 20\\
65&95 & 31&21 &10& 20& 11\\
85&119 & 35 &12 &23& 11& 24\\
96&132 & 37 &25 &12 &24& 13\\
120&160 & 41&14 &27 &13 &28\\
133&175 & 43 &29 &14 &28& 15\\
161&207 & 47 &16 &31 &15 &32\\
\bottomrule
\end{tabular}\caption{Sizes of partitions of the first 15 orthogonal dice}\label{tab:size}
\end{center}
\end{table}

\FloatBarrier

We give a remark on the arithmetic progressions of the orthogonal dice.
\begin{samepage}\begin{rem}[Arithmetic progressions-Chebyshev's Bias] For $k\in\mathcal{I}$, let $m_k\equiv(k^2-1)/3$, $n_k\equiv m_k+2k+2$, and $l_k \equiv n_k - m_k + 1=2k+3$. Then $\{l_k: k\in\mathcal{I}\}$ is identified to arithmetic progressions $\{5+6j: j\ge0\}$ and $\{7 + 6j: j\ge0\}$. Similarly, the partitions yield arithmetic progressions. The progression for $|B_{k_1}|$ of $\{3+4j: j\ge0\}$ appears in the phenomenon called \emph{Chebyshev's Bias} \citep{chebyshev} where there are more primes yielded by $3+4j$ than $1+4j$ (up to some limit). 
\end{rem}\end{samepage}

The generating functions for the quantities in the corollary on coprimarily are given in the following proposition. 
\begin{prop}[Generating functions]\label{prop:gen} Letting $f_k=(m_k+n_k)/2$, $l_k=n_k-m_k+1$, $a_k=|A_k|$, $b_k=|B_k|$, $c_k=|C_k|$, and $d_k=|D_k|$, the generating functions of the component sequences of $\{(k,m_k,n_k,f_k,l_k,a_k,b_k,c_k,d_k): k\in\mathcal{I}\}$ are given by the rational functions \[\psi_k(t)=\frac{1+t+t^2}{(1-t)^2(1+t)},\quad \psi_m(t)=\frac{t +4t^2+t^3}{(1-t)^3(1+t)^2},\quad\psi_n(t)=\frac{4+3t-t^3}{(1-t)^3 (1+t)^2},\quad \psi_f(t) =\frac{2+2t+2t^2}{(1-t)^3 (1+t)^2}\]\[\psi_l(t) = \frac{5+2t-t^2}{(1-t)^2(1+t)},\quad\psi_a(t) = \frac{2+5t-t^3}{(1-t^2)^2},\quad\psi_b(t) = \frac{3+2t+t^2}{(1-t^2)^2},\quad\psi_c(t)=\frac{1+4t+t^2}{(1-t^2)^2} ,\quad\psi_d(t)=\frac{4+3t-t^3}{(1-t^2)^2}\] each encoding a linear recursive sequence with constant coefficients. 
\end{prop}

\begin{rem}[Decompositions]
Furthermore, because of the interlaced nature of canonical index $k$, each generating function may be decomposed into a sum of two generating functions as well as a product of the generating function $1/(1-t)$, corresponding to the geometric series, and another generating function. This is a sum representation of the sequence.
\end{rem}

The decompositions are given in the following proposition.

\begin{prop}[Decomposition] Each of the nine generating functions may be decomposed into the sum and product of two generating functions, the sum corresponding to even-odd indices, per the following: 
\begin{align*}\psi_k(t)&=\frac{1+t+t^2}{(1-t)^2(1+t)}=\frac{1+2t^2}{(1-t^2)^2}+\frac{t(2+t^2)}{(1-t^2)^2}= \frac{1+t+t^2}{1-t^2}\frac{1}{1-t}\\
\psi_m(t)&=\frac{t+4t^2+t^3}{(1-t)^3(1+t)^2} = \frac{t^2(5+t^2)}{(1-t^2)^3} + \frac{5(1+5t^2)}{(1-t^2)^3} = \frac{t+4t^2+t^3}{(1-t^2)^2}\frac{1}{1-t}\\
\psi_n(t)&=\frac{4+3t-t^3}{(1-t)^3(1+t)^2} = \frac{(1+t^2)(4-t^2)}{(1-t^2)^3} + \frac{t(1-7t^2)}{(1-t^2)^3}=\frac{4+3t-t^3}{(1-t^2)^2}\frac{1}{1-t}\\
\psi_f(t)&=\frac{2+2t+2t^2}{(1-t)^3(1+t)^2} = \frac{2+4t^2}{(1-t^2)^3} + \frac{4t+2t^3}{(1-t^2)^3} = \frac{2+2t+2t^2}{(1-t^2)^2}\frac{1}{1-t}\\
\psi_l(t)&=\frac{5+2t-t^2}{(1-t)^2(1+t)} = \frac{5+t^2}{(1-t^2)^2} + \frac{t(7-t^2)}{(1-t^2)^2} = \frac{5+2t-t^2}{1-t^2}\frac{1}{1-t}\\
\psi_a(t)&=\frac{2+5t-t^3}{(1-t^2)^2}=\frac{2}{(1-t^2)^2} + \frac{t(5-t^2)}{(1-t^2)^2} = \frac{2+5t-t^3}{(1-t)(1+t)^2}\frac{1}{1-t}\\
\psi_b(t)&=\frac{3+2t+t^2}{(1-t^2)^2}=\frac{3+t^2}{(1-t^2)^2}+\frac{2t}{(1-t^2)^2} = \frac{3+2t+t^2}{(1-t)(1+t)^2}\frac{1}{1-t}\\
\psi_c(t)&=\frac{1+4t+t^2}{(1-t^2)^2}=\frac{1+t^2}{(1-t^2)^2} + \frac{4t}{(1-t^2)^2} = \frac{1+4t+t^2}{(1-t)(1+t)^2}\frac{1}{1-t}\\
\psi_d(t)&=\frac{4+3t-t^3}{(1-t^2)^2} = \frac{4}{(1-t^2)^2} + \frac{t(3-t^2)}{(1-t^2)^2} = \frac{4+3t-t^3}{(1-t)(1+t)^2}\frac{1}{1-t}.
\end{align*}

\end{prop}

Moreover, each generating function allows recovery of an explicit expression of the terms through the inverse $Z$-transform of the generating function. The inverse transforms as given below.

\begin{prop}[Impulse-response]\label{prop:impulsereponse} The inverse $Z$-transform of each of the nine generating functions of the orthogonal dice evaluated at $t=1/z$ where $z$ is the $Z$-transform variate gives recovery of the sequences on $\N_{\ge0}$ as \begin{align*}j&\in\N_{\ge0}\\
k(j)&=\frac{1}{4}(3+(-1)^j+6j)\\m(j)&=\frac{1}{8} \left(2 j \left(3 j+(-1)^j+3\right)+(-1)^j-1\right)\\n(j)&=\frac{1}{8} \left(6 j (j+5)+(-1)^j (2 j+5)+27\right)\\f(j)&=\frac{1}{8} \left(2 j \left(3 j+(-1)^j+9\right)+3 (-1)^j+13\right)\\l(j)&=\frac{1}{2} \left(6 j+(-1)^j+9\right)\\a(j)&=\frac{1}{2} \left(3 j+(-1)^{j+1} (j+1)+5\right)\\b(j)&=\frac{1}{2} \left((-1)^j+3\right) j+(-1)^j+2\\c(j)&=\frac{1}{2} \left(3+(-1)^{j+1}\right) (j+1)\\d(j)&=\frac{1}{2} \left((-1)^j+3\right) (j+2).\end{align*}

\end{prop}

We give some examples of the generating functions and their linear recursive sequences. 
\begin{rem}[Examples] Consider the generating function for the sequence of $l_k=n_k-m_k+1$ \[\psi_l(t) = \frac{5+2t-t^2}{1-t-t^2+t^3}.\] Then \[l_j = l_{j-1} + l_{j-2} - l_{j-3},\quad l_1 = 5,\quad l_2 = 7, \quad l_3 = 11.\] For $f_k=(m_k+n_k)/2$, we obtain \[f_j = f_{j-1}+2f_{j-2}-2f_{j-3}-f_{j-4}+f_{j-5},\quad f_1=2, f_2=4, f_3=10, f_4=14, f_5=24.\] Similarly, for $a_k=|A_k|$, we have \[\psi_a(t)=\frac{2+5t-t^3}{(1-t^2)^2}\] with \[a_j = 2a_{j-2} - a_{j-4},\quad a_1 = 2,\quad a_2 = 5,\quad a_3 = 4,\quad a_4 = 9.\]
\end{rem}

We give two sums of reciprocals of sequences---one transcendental, another infinite.  
\begin{rem}[Sums of reciprocals] We have \[\sum_{k\in\mathcal{I}}\frac{1}{f_k} =\sum_{k\in\mathcal{I}}\frac{3}{(k+1)(k+2)}= \frac{1}{3}(9-\sqrt{3}\pi)\simeq 1.1862 <\zeta(3) < \sqrt{2} < \varphi<\zeta(2)\] and \[\sum_{k\in\mathcal{I}}\frac{1}{l_k} = +\infty,\] where the first is transcendental, $\varphi$ is the golden ratio, and $\zeta$  is the Riemann zeta function. The transcendental number is obtained from a quadratic sequence. Other examples of quadratic sequences having transcendental sums of reciprocals are heptagonal numbers and square numbers. Note that \[\sum_{k\ge1}\frac{3}{(k+1)(k+2)}=\frac{3}{2},\] so that the transcendental property follows from the structure of the integrand and the canonical index set $\mathcal{I}$.
\end{rem}

The constant recursive sequences can be element-wise added or  multiplied or convolved, preserving constant recursion.

\begin{rem}[Operations] Given two sequences $a=(a_j)$ and $b=(b_j)$ with generating functions $F$ and $G$, we have generating function $H$ for the following operations: element wise sum and multiplication and the Cauchy product: \begin{align*}a+ b &\Leftrightarrow H(t)=F(t)+G(t)\\a\odot b&\Leftrightarrow H(t)=\frac{1}{2\pi}\int_{0}^{2\pi}F(\sqrt{t}e^{iz})G(\sqrt{t}e^{-iz})\D z\\a\circledast b&\Leftrightarrow H(t)=F(t)G(t).\end{align*}
\end{rem}

Each of the generating functions enables ready calculation of the discrete Fourier transform.

\begin{samepage}
\begin{rem}[Fourier transform] Let $(f(j))_{j=0}^\infty$ be the sequence of an orthogonal dice parameter given by Proposition~\ref{prop:impulsereponse}. Let $\psi_k(t)=\sum_{j=0}^{k-1}f(j)t^j$ be the generating function of the first $k$ terms of the sequence, $(f(j))_{j=0}^{k-1}$. Then $F(j)=\psi_k(\omega_j)$ where $\omega_j=\exp[2\pi\mathbf{i}j/k]$ is the $j$-the root of unity and $F(j)$ is the $j$-th component of the discrete Fourier transform of $(f(j))_{j=0}^{k-1}$, with $F=(F(j):j=0,1,\dotsb,k-1)$ the transformed sequence. The inverse Fourier transform is given by \[f(j) = (F^{-1}F)_j=\frac{1}{k}\sum_{z=0}^{k-1}\omega_j^{-z}F(z)\for j=0,1,\dotsb,k-1.\] Below in Figure~\ref{fig:fourier} the magnitude $|F|$ is plotted for the first 1000 points of the sequences of the canonical parameter $k$ and coprime parameter $|D|$. Notably, they are different: the canonical parameter has a pure decreasing spectrum, when in symmetry gives a bathtub, whereas the coprime parameter has a `well' appearance, which upon reflection gives a double-well shape. 

\begin{figure}[h!]
\centering
\begingroup
\captionsetup[subfigure]{width=3in,font=normalsize}
\subfloat[$k$]{\includegraphics[width=3in]{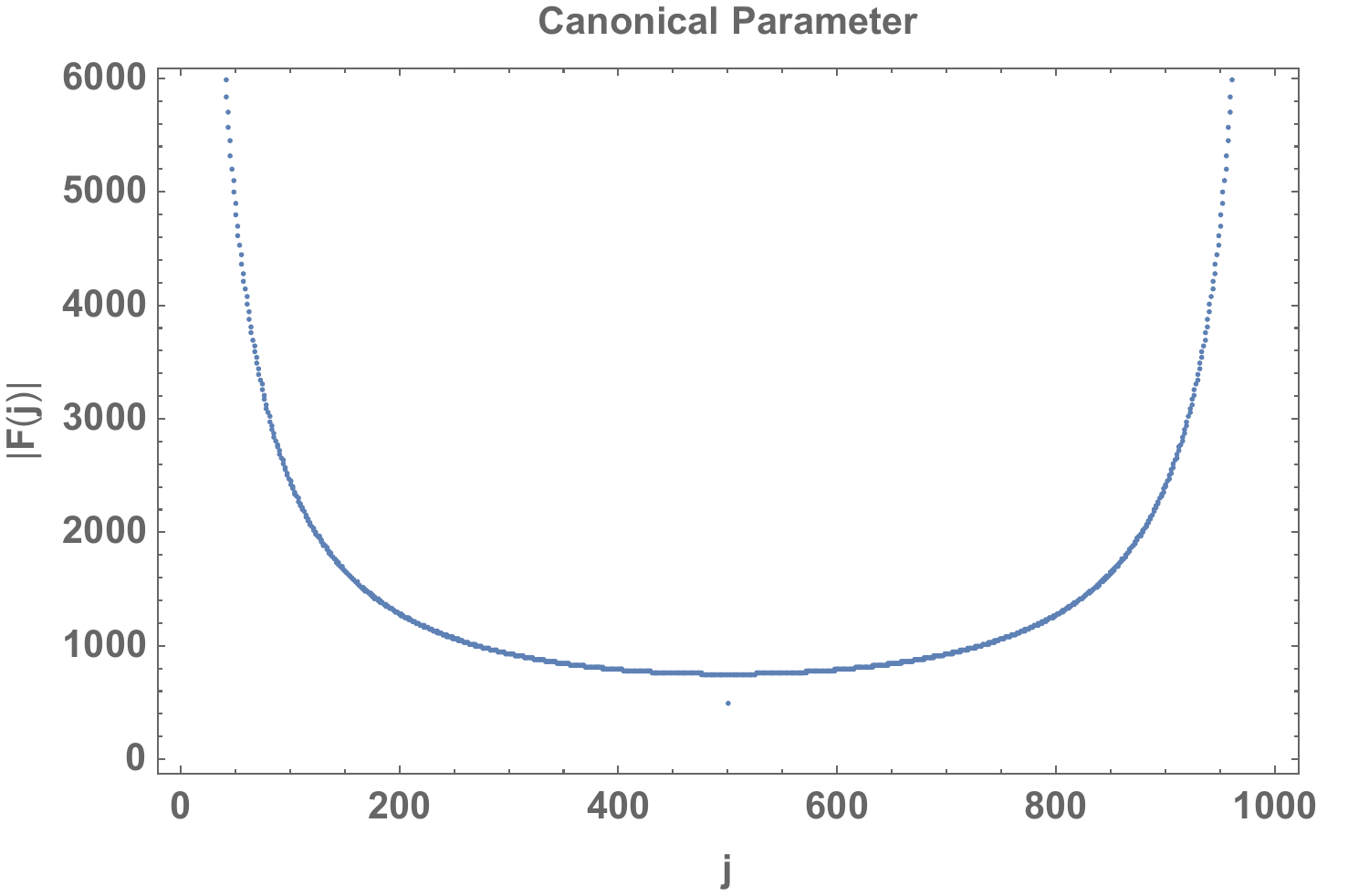}}
\subfloat[$|D|$]{\includegraphics[width=3in]{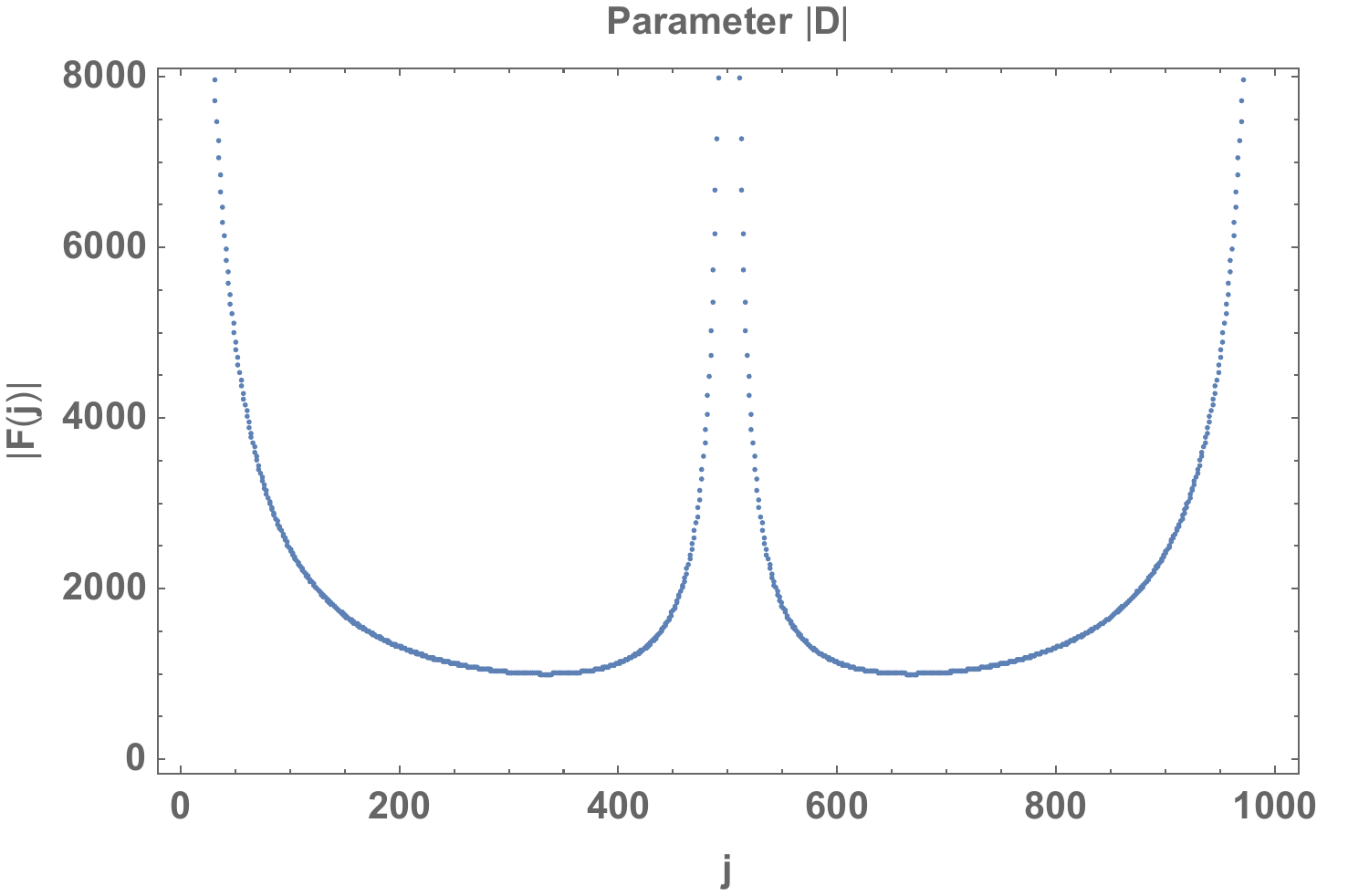}}\\
\endgroup
\caption{Plot of $|F|$ for canonical parameter $k$ and coprime parameter $|D|$}\label{fig:fourier}
\end{figure}\FloatBarrier
\end{rem}
\end{samepage}

Following from Theorem~\ref{thm:1} and Corollary~\ref{cor:coprime}, we have the following result.

\begin{prop}[Integer representation] Every natural number $n$ greater than or equal to five, coprime to two and to three, may be represented as a sum of coprime naturals $n=a+b$ such that $c=a-1$ and $d=b+1$ are coprime and $a\ne b\ne c\ne d$. 
\end{prop}

We give an immediate remark. 
\begin{rem}[Coprimality]\label{re:coprime} The proposition, in conjunction with the result that the sum of two coprime numbers is coprime to their summands, yields that $n$ is coprime to each of $\{a,b,c,d\}$. 
\end{rem}

Corollary~\ref{cor:coprime} yields a co-prime splitting property of the orthogonal dice: any orthogonal die may be represented at least two ways as a mixture of non-orthogonal dice with coprime sizes. 

\begin{thm}[Co-prime splitting] Every orthogonal die $\kappa_{m,n}\in\mathcal{S}$ at index $k\in\mathcal{I}$ with size $n-m+1=2k+3$ may be represented as one of at least two mixtures of two non-orthogonal dice defined on sets $A_k$ and $B_k$---$\kappa_a=\text{Uniform}(A_k)$ and $\kappa_b=\text{Uniform}(B_k)$---as \[\kappa_{m,n}=\frac{|A_k|}{2k+3}\kappa_a+\frac{|B_k|}{2k+3}\kappa_b,\] such that $A_k$ and $B_k$ are rectangular and disjoint, partition the support $A_k\cup B_k=\{m,\dotsb,n\}$, and have coprime sizes $\gcd(|A_k|,|B_k|)=1$. 
\end{thm}

The following is another consequence of Theorem~\ref{thm:1}, providing a way of characterizing the construction of $\mathcal{S}$ in terms of prime numbers. 

\begin{samepage}
\begin{cor}[Prime construction]\label{thm:2} Let the integer $p$ be a product of one or more primes, where each prime is equal to or greater than five. Then there exists an orthogonal die $\kappa_{m,n}\in\mathcal{S}$ with $p$ sides, canonical parameter $k=(p-3)/2\in\mathcal{I}$, support parameters \begin{align*}m&=\frac{1}{12}(p-5)(p-1)\\n&=\frac{1}{12}(p+7)(p-1)\end{align*} mean \[c=\frac{1}{12}(p^2-1)\]  and position $\lceil\frac{2}{3}k\rceil=\lceil\frac{1}{3}p\rceil-1$.
\end{cor}\end{samepage}
\begin{proof} Theorem~\ref{thm:1} yields $\mathcal{S}$. As stated there, $p=n-m+1$, and this is unique for each orthogonal die (the orthogonal die are indexed by products of powers of primes greater than or equal to five). Thus for each such $p$, there exists an orthogonal die $\kappa_{m,n}$. To get the parameters $m$ and $n$, we set-up and solve the system of equations based on $c=\delta^2$ for $m$ and $n$: \begin{align*}\frac{m+n}{2}&=\frac{1}{12}(p^2-1)\\p&=n-m+1\end{align*} Set $n=p+m-1$ from the second equation and plug into the LHS of the first equation and solve for $m$. Similarly, set $m=n-p+1$ and solve for $n$. The index follows from $p=n-m+1=2k+3$.
\end{proof} 

We give a couple remarks on prime dice.

\begin{rem}[Prime dice] Let $p\ge5$ be prime. Then by Corollary~\ref{thm:2} each such prime number is uniquely identified to an orthogonal die. These are the prime dice. If these primes are taken from some family, then we carry the name over to the corresponding family of orthogonal dice, i.e. Mersenne primes and their orthogonal dice.
\end{rem}

\begin{rem}[Largest known prime die]\label{re:largest} $p=2^{82\,589\,933}-1$ is the largest known prime with $24\,862\,048$ digits and is Mersenne. Then by Corollary~\ref{thm:2} the corresponding orthogonal die $\kappa_{m,n}$ has $p$ sides,  parameters \begin{align*}m&=\frac{1}{12}(2^{82\,589\,933}-6)(2^{82\,589\,933}-2)\\n&=\frac{1}{12}(2^{82\,589\,933}+6)(2^{82\,589\,933}-2)\end{align*} and mean \[c=\frac{1}{12}2^{82\,589\,933}(2^{82\,589\,933}-2).\] It has canonical parameter \[k=2^{82\,589\,932}-2\in\mathcal{I}\] at position $\lceil\frac{2}{3}k\rceil=\lceil\frac{1}{3}(2^{82\,589\,933}-4)\rceil$.\end{rem}

Last, but not least, we see that the orthogonal dice are self-similar, where an orthogonal die of composite size may be decomposed into orthogonal dice of coprime sizes.

\begin{samepage}
\begin{thm}[Self-similarity]\label{thm:selfsimilar} Every independent roll $K$ of an orthogonal die $\kappa_{m,n}\in\mathcal{S}$ with coprime factorization $n-m+1=b_1\dotsb b_l$ is equivalent to $l$ independent rolls of orthogonal dice $\{K_i\sim\kappa_{m_i,n_i}\in\mathcal{S}: i=1,\dotsb,l\}$ at canonical indices $\{k_i=(b_i-3)/2\in\mathcal{I}: i=1,\dotsb,l\}$ with pairwise coprime sizes $b_1,\dotsb,b_l$. 
\end{thm}
\end{samepage}
\begin{proof} (Sufficiency) Given $n-m+1=b_1\dotsb b_l$ where $b_1,\dotsb,b_l\ge5$ are pairwise coprime, let $k_i=(b_i-3)/2\in\mathcal{I}$ and $m_i = (k_i^2-1)/3$ for $i=1,\dotsb,l$. Then $K\Rightarrow(K_1,\dotsb,K_l)$ follows from linear congruence relations $K_i=f_i(K)=(K-m\mmod b_i)+m_i$ for $i=1,\dotsb,l$. Independence follows from that of the congruences for coprime bases and uniformity follows from $\kappa_{m,n}\circ f_i^{-1}=\kappa_{m_i,n_i}$, belonging to $\mathcal{S}$ by Theorem~\ref{thm:1} at index $k_i=(b_i-3)/2$ in $\mathcal{I}$, for $i=1,\dotsb,l$. (Necessity) Note that $k=(b_1\dotsb b_l -3)/2\in\mathcal{I}$ following from pairwise coprime $b_1,\dotsb,b_l\ge5$ with $m=(k^2-1)/3$, $n=m+2k+2$, and $n-m+1=b_1\dotsb b_l$. Then $K\Leftarrow(K_1,\dotsb,K_l)$ follows from the Chinese remainder theorem as $K=\text{ChineseRemainder}(K_1-m_1,\dotsb,K_l-m_l | b_1,\dotsb,b_l)+m\in\{m,\dotsb,n\}$ for the given $(K_1,\dotsb,K_l)$ and $(b_1,\dotsb,b_l)$ such that $K\sim\kappa_{m,n}\in\mathcal{S}$ at index $k=(b_1\dotsb b_l-3)/2\in\mathcal{I}$.
\end{proof}

 \subsection{Generalizations} We discuss some generalizations of orthogonal die random measures: general orthogonal random measures,  superposition random measures, and product random measures. All orthogonal constructions are Poisson-like, holding for Poisson random measures. Note that Poisson random measures are additive (independent), stronger than orthogonality.
 
 The following gives a construction of general orthogonal random measures.
 
 \begin{samepage}
 \begin{thm}[Orthogonal random measure]\label{thm:orthogonalrm} Let $N=(\kappa_{m,n},\nu)$ be an orthogonal die random measure on $(E\times\R_{\ge0},\mathscr{E}\otimes\mathscr{B}_{\R_{\ge0}})$. Define \[L(A) = \int_{A\times\R_{\ge0}}N(\D x,\D z) z\for A\in\mathscr{E}.\] Then, $L$ is an orthogonal random measure on $(E,\mathscr{E})$ with Laplace functional \[\E e^{-Lf} = \psi_{m,n}(\int_{E\times\R_{\ge0}}\nu(\D x, \D z)e^{-f(x)z})\for f\in\mathscr{E}_{\ge0}.\] 
 \end{thm}\end{samepage}\begin{proof} By Fubini's theorem, $L$ is a random measure. $L(A)$ is determined by the trace of $N$ on $A\times\R_{\ge0}$. For finitely many disjoint sets $A,\dotsb,B$ in $\mathscr{E}$, the traces of $N$ over $A\times\R_{\ge0},\dotsb,B\times\R_{\ge0}$ are orthogonal by Proposition~\ref{prop:orthogonaldie}, so $L(A),\dotsb,L(B)$ are decorrelated and $L$ is orthogonal. The Laplace functional follows from definition. 
 \end{proof}
 
The statistics readily follow and are given below.
 
 \begin{rem}[Statistics] The mean and covariance are \[\E Lf = \frac{m+n}{2}\int_{E\times\R_{\ge0}}\nu(\D x,\D z)f(x)z,\quad \Cov(Lf,Lg) = \frac{m+n}{2}\int_{E\times\R_{\ge0}}\nu(\D x,\D z)f(x)g(x)z^2.\]
 \end{rem}
 
 An important concept is a superposition of random measures. The following result is for superpositions of orthogonal die random measures. They admit Poisson and Gaussian regimes.
\begin{samepage}
\begin{thm}[Superposition]\label{thm:super} Let $F$ be countable, e.g., $F=\{1,2,\dotsb\}$, and put $\mathscr{F}=2^F$. Let $\{N_j=(\kappa_{m_j,n_j},\nu_j): j\in F\}$ be an independency of orthogonal die random measures on $(E,\mathscr{E})$. Then the $\Sigma$-finite random measure $M$ on $(E\times F,\mathscr{E}\otimes\mathscr{F})$ defined by \[M(A\times\{j\})=N_j(A)\for A\in\mathscr{E},\quad j\in F\] is orthogonal and has $\Sigma$-finite mean measure $\E M = \lambda$ \[\E Mf = \lambda f = \int_{E\times F}c_y\nu_y(\D x)\delta_y(\D y)f(x,y)\for f\in(\mathscr{E}\otimes\mathscr{F})_{\ge0},\] Laplace functional \[L(f) = \E e^{-Mf} = \prod_{j\in F}\psi_{m_j,n_j}(\nu_j e^{-f(\cdot,j)})\sim \exp_-\lambda(1-e^{-f})\for f\in(\mathscr{E}\otimes\mathscr{F})_{\ge0},\] and covariance \[\Cov(Mf,Mg) = \lambda(fg)\for f,g\in(\mathscr{E}\otimes\mathscr{F})_{\ge0}.\]
\end{thm}\end{samepage}
\begin{proof}Orthogonality and the Laplace functional, mean measure, and covariance follow from definitions. The Poisson regime is implied by the convergence in distribution to Gaussian by central limit theorem and the convergence in distribution of Poisson to Gaussian: for increasing sequence $(f_n)\nearrow f$, \[N_n=\sum_{j\in F}N_jf_n(\cdot,j)\stackrel{D}{\rightarrow}\text{Gaussian}(\lambda f,\lambda f^2)\stackrel{D}{\leftarrow}\text{Poisson}(\lambda f_n)\] such that \[\frac{\prod_{j\in F}\psi_{m_j,n_j}(\nu_j e^{-f_n(\cdot,j)})}{ \exp_-\lambda(1-e^{-f_n})}\rightarrow 1.\] \end{proof}

An example of a superposition random measure is logarithmic. We give the construction and remark on Gaussian and Riemann zeta function connections. 

\begin{rem}[Logarithmic]\label{re:logarithmic} Take $(E\times F,\mathscr{E}\otimes\mathscr{F})=([0,1]\times\N_{\ge0},\mathscr{B}_{[0,1]}\otimes 2^{\N_{\ge0}})$ and $\nu_j(\D x) = \log(1/x)^{j}/j!\,\D x$ with mean $2^{-(j+1)}$ and $\sum_j 2^{-(j+1)}=1$. Consider the superposition $M$ of the orthogonal dice $N_j=(\kappa_{m,n},\nu_j)$ where $c=(m+n)/2$. Let $g\in\mathscr{E}_{\ge0}$ and $f(x,y)=g(x)\ind{F}(y)$ and note that $\sum_j\nu_j(\D x) = 1/x\, \D x$. Then $Mf$ has mean \[\E M f = \lambda f = c\int_{E\times F}\nu_y(\D x)\delta_y(\D y)g(x)\ind{F}(y) = c\int_E\frac{1}{x}g(x)\D x\for g\in\mathscr{E}_{\ge0}.\] When $g(x)=1$, then $\E Mf=+\infty$. For $g(x)=x$, the Laplace transform is given by \[\varphi(\alpha)=\E e^{-\alpha Mf} \sim\exp_-c(\gamma+\Gamma(0,\alpha)+\log(\alpha)) \for \alpha\in\R_{\ge0},\] $Mf\simeq\text{Gaussian}(c,c/2)$, and the statistics are \[\E Mf = c,\quad \Var Mf = c/2.\] 
\end{rem}

\begin{rem}[Gaussian]
To see the Gaussian distribution, conduct a second-order Taylor expansion of $\Gamma(0,\alpha)$ about zero, i.e., $\Gamma(0,\alpha)= -\gamma-\log(\alpha)+\alpha-\alpha^2/4+ O(\alpha^3)$ and invert the Fourier transform $\varphi(-i\alpha)=\exp[ic\alpha - \frac{1}{2}\frac{c}{2}\alpha^2]$, which yields the claimed Gaussian distribution $\P(Mf\in\D x)\simeq\frac{1}{\sqrt{c\pi}}e^{-(c-x)^2/c}\D x$. This is also directly obtained by the $\text{Gaussian}(\lambda f,\lambda f^2)$ limit.
\end{rem}

\begin{rem}[Zeta] Consider $\nu_j(\D x)=\log(1/x)^{j}/j!\,\D x$ and let $g(x)=1/(1-x)$. Then \[\E N_jg = c\int_{[0,1]}\nu_j(\D x)\frac{1}{1-x} = c\zeta(j+1)\for j=1,2,\dotsb\] where $\nu_j(g)=\zeta(j+1)=\E N_jg/c$ and $\zeta$ is the Riemann zeta function. Their superposition has infinite mean $\E Mf = c\int_{[0,1]}1/xg(x)\D x=+\infty$. 
\end{rem}

A generalization of an die random measure is to consider its product. We give the mean below.

\begin{samepage}
\begin{thm}[Product random measure]\label{thm:product} Consider the die random measure $N=(\kappa_{m,n},\nu)$ on $(E,\mathscr{E})$ where $\kappa_{m,n}\in\mathcal{K}$ with mean $c=(n+m)/2$ and variance $\delta^2=((n-m+1)^2-1)/12$. Form the product random measure $M=N\times N$ on $(E\times E,\mathscr{E}\otimes\mathscr{E})$ as \begin{equation}\label{eq:product}Mf = \int_{E\times E}N(\D x)N(\D y)f(x,y) \for f\in(\mathscr{E}\otimes\mathscr{E})_{\ge0}.\end{equation}Then, $M$ has mean \[\E M f = c(\nu\times I)f+(c^2+\delta^2-c)(\nu\times\nu)f\for f\in(\mathscr{E}\otimes\mathscr{E})_{\ge0}\] where $I$ is the identity kernel $I(x,\cdot)=\delta_x(\cdot)$.
\end{thm}\end{samepage}
\begin{proof} The result follows from using the covariance of the mixed binomial process with $f=\ind{A\times B}$ and applying a monotone class argument. Explicitly, \[\E Mf = c\int_E\nu(\D x)f(x,x) + (c^2+\delta^2-c)\int_{E\times E}\nu(\D x)\nu(\D y)f(x,y).\] 
\end{proof}

The product random measure of an orthogonal die has the same mean measure as of Poisson. 
\begin{cor}[Product random measure]\label{cor:product} Consider the orthogonal die random measure $N=(\kappa_{m,n},\nu)$ on $(E,\mathscr{E})$ and put $\lambda = \frac{m+n}{2}\nu$. Form the product random measure $M=N\times N$ on $(E\times E,\mathscr{E}\otimes\mathscr{E})$ as in \eqref{eq:product}. Then, $M$ has mean \[\E M f = (\lambda\times I)f+(\lambda\times\lambda)f\for f\in(\mathscr{E}\otimes\mathscr{E})_{\ge0}\] where $I$ is the identity kernel $I(x,\cdot)=\delta_x(\cdot)$. 
\end{cor}

\section{Applications}\label{sec:applications} We give some applications of the  orthogonal dice. First we show application to modeling bounding counts, a fundamental problem in statistics. There we see a correspondence between elements of dice and general probability counting measures through mean and variance and observe this yields mean-covariance equivalency of their mixed binomial processes. In the next application, we develop statistical inference algorithms for modeling of count data using the dice and their convolutions, including maximum likelihood and Bayesian estimation procedures. We show that count data is well modeled by convolutions of dice and efficiently estimated using the Metropolis-Hastings algorithm. In the third application, we use the orthogonal dice to construct canonical stochastic processes of Wiener and geometric Brownian motion. This result is important for manifold reasons, owing to the foundational nature of these processes. In the penultimate application, we discuss a statistical model for traffic flows based dice. There we construct mixed binomial processes representing systems of independently moving vehicles whose number of vehicles is bounded and uniform, with insightful covariance. In the last application we demonstrate application of the self-similarity property of orthogonal dice. We illustrate self-similarity with a simple example and show the property yields sparse uniform lattices of naturals, which uses square-root number of points to uniformly sample a hypercube of some volume of points. 

\subsection{Modeling systems having bounded counts} Suppose we have a collection of independent identically distributed count data $\mathbf{K}=\{K_i: i=1,\dotsb,z\}$, where $K_i\sim\kappa$ for probability counting measure $\kappa$ having finite mean $c$ and variance $\delta^2$. If we additionally know the mean and variance of the counting law $\kappa$, we can solve for the support parameters $m$ and $n$ of the corresponding die in terms of $c$ and $\delta^2$, obtaining (before rounding) \begin{align*} m&=\frac{1}{2} \left(2 c-\sqrt{12 \delta^2+1}+1\right)\\n&=\frac{1}{2} \left(2 c+\sqrt{12 \delta^2+1}-1\right).\end{align*} 

The $\kappa_{m,n}$ representation of $\kappa$ is the maximum entropy distribution on a contiguous interval of naturals that matches the first and second-order statistics of $\kappa$. The simple relation $(c,\delta^2)\Leftrightarrow(m,n)$ yields a surrogate die $\kappa_{m,n}$ for any given $\kappa$ having finite mean and variance. Moreover, we immediately know the classification of the die from $\sign(\delta^2-c)$, yielding ($-1$) negative, ($0$) orthogonal, and ($+1$) positive dice respectively. Moreover, letting $\nu$ be a deterministic probability measure on $(E,\mathscr{E})$, then the random counting measure $N_{m,n}=(\kappa_{m,n},\nu)$ has the same mean-covariance structure as the random counting measure $N=(\kappa,\nu)$, i.e., \[\E N_{m,n}f = \E Nf,\quad \Cov(N_{m,n}f,N_{m,n}g)=\Cov(Nf,Ng).\]

Recall the maximum entropy distribution on $\{m,\dotsb,n\}$ is $\kappa_{m,n}=\text{Uniform}\{m,\dotsb,n\}$ with pgf $\psi_{m,n}$. The distribution is symmetric, constant, and lacks a mode. We can construct new distributions by convolving dice. Towards this, consider independent $K_1,K_2\sim\kappa_{m,n}$ and define $K=K_1+K_2$. Then $\support(K)=\{2m,\dotsb,2n\}$ with size $2n-2m+1$ and $K$ follows the triangular distribution, with mode at $m+n$. That is, $K\sim\triangle_{m,n}=\kappa_{m,n}\circledast\kappa_{m,n}$ with pgf $\psi_{m,n}^2$. The mean and variance are respectively $c=\E K = m+n$ and $\delta^2=\Var K = ((n-m+1)^2-1)/6$. Here the $(c,\delta^2)\Leftrightarrow(m,n)$ solutions are \begin{align*} m&=\frac{1}{2} \left( c-\sqrt{6 \delta^2+1}+1\right)\\n&=\frac{1}{2} \left(c+\sqrt{6 \delta^2+1}-1\right).\end{align*} Again, the value of $\sign(\delta^2-c)$ determines the dispersion of the triangular law $\triangle_{m,n}$ and its generating die $\kappa_{m,n}$. The size of the support of a triangular law sharing the same statistics as a die is larger the die. Basically, the reduction in entropy enables more points to be added to the support for the same fixed mean and variance. Now consider $l$-fold convolutions of the $K_1,\dotsb,K_l\sim\kappa_{m,n}$, $K=K_1+\dotsb+K_l$. Then $K$ has pgf $\psi_{m,n}^l$, support $\support(K)=\{lm,\dotsb,ln\}$, mode $l(m+n)/2$, mean $c=l(m+n)/2$, and variance $\delta^2=l((n-m+1)^2-1)/12$. The $(c,\delta^2)\Leftrightarrow(m,n)$ correspondence is \begin{align*} m&=\frac{2c+l-\sqrt{l}\sqrt{l+12\delta^2}}{2 l}\\n&=\frac{2c-l+\sqrt{l} \sqrt{l+12 \delta^2}}{2 l}.\end{align*} 

A notable family of distributions with bounded counts is $\kappa=\text{Binomial}(l,p)$, where $K\sim\kappa$ has support $\support(K)=\{0,1,\dotsb,l\}$ with pgf $\psi(t)=(1-p+pt)^l$. Consider the negative die $\kappa_{0,1}=\text{Bernoulli}(1/2)$ with associated pgf $\psi_{0,1}=\frac{1}{2}(1-t)$. Then the $l$-order convolution is binomial $\kappa_{0,1}^{\circledast l}=\text{Binomial}(l,1/2)$, with pgf $\psi_{0,1}^l(t)=(\frac{1}{2}(1-t))^l$. Then we can $a$-thin the Bernoulli dice, giving pgf $\psi_a(t)=\psi(1-a+at)=\frac{1}{2}(1-(1-a+at))=1-\frac{a}{2}+\frac{a}{2}t$ and, applied to the convolution, pgf $\psi_a^l(t)=(1-\frac{a}{2}+\frac{a}{2}t)^l$, yielding a $\text{Binomial}(l,\frac{a}{2})$ distributed random variable $K$. Thus the family of binomial distributions for $0<p\le 1/2$ is recapitulated in terms of the maximum entropy Bernoulli die $\kappa_{0,1}$ and thinning. Finally, sending $l\rightarrow+\infty$ and $p\rightarrow0$ while fixing $np=c\in(0,\infty)$, a $\text{Poisson}(c)$ random variable $K$ is obtained. 

\subsection{Statistical inference of bounded counts}
Again consider the setting of having collection $\mathbf{K}=\{K_i:i=1,\dotsb,z\}$ distributed according to (here unknown) $\kappa$. We approximate $\kappa$ by the $l$-fold convolution of $\kappa_{m,n}$, denoted $\kappa_{m,n}^{\circledast l}$. The pgf is $\psi_{m,n}^l$ such that \[\P(K=k)=[t^k]\psi^l_{m,n}(t)=\frac{1}{2\pi i}\oint_C\frac{\psi_{m,n}^l(t)}{t^{k+1}}\D t\for k\in\{lm,\dotsb,ln\}.\] This is stably obtained by computing the residue of $\psi_{m,n}^l(t)/t^{k+1}$ at zero. Then the likelihood of $(m,n,l)$ given $\mathbf{K}$ is \[L(m,n,l|\mathbf{K})=\prod_{i=1}^z[t^{K_i}]\psi_{m,n}^l(t).\] The likelihood is zero if any of the samples fall outside of the bound, $|\mathbf{K}\cap\{lm,\dotsb,ln\}|<z$. Putting $a=\min\mathbf{K}$ and $b=\max\mathbf{K}$, we seek maximum-likelihood estimate (MLE) \[(\hat{m},\hat{n},\hat{l})=\argmax_{(m,n,l): lm\le a, ln\ge b}L(m,n,l|\mathbf{K}),\] obtaining the estimated law $\kappa_{\hat{m},\hat{n}}^{\circledast \hat{l}}$ with support $\{\hat{l}\hat{m},\dotsb,\hat{l}\hat{n}\}$, mean $\hat{c}=(\hat{l}\hat{m}+\hat{l}\hat{n})/2$, and $\hat{\delta}^2=\hat{l}((\hat{n}-\hat{m}-1)^2-1)/12$. Again the mean-covariance structure of the law can be classified according to value of $\sign(\hat{\delta}^2-\hat{c})$.

A Bayesian approach can be employed for estimation of $(m,n,l)$. First we assume a prior probability $\P(m,n,l)$. Then the posterior distribution of $(m,n,l)$ is given by \[\P(m,m,l|\mathbf{X})=\frac{L(m,n,l|\mathbf{X})\P(m,n,l)}{\P(\mathbf{X})}.\] The normalizing constant is difficult to compute, so a natural strategy is to employ a Monte Carlo method, such as Metropolis-Hastings (MH) \citep{metropolis1,hastings}, to obtain posterior sample estimates. Then the maximum a-posterior (MAP) estimate be obtained from the generated sequence. To use MH, we require a proposal density (conditional distribution) $\P(m',n',l'|m,n,l)$. There is a hierarchy in the parameters, where $l$ is at the base, and $(m,n)$ follow. Thus we factor $\P(m',n',l'|m,n,l)=\P(m',n'|m,n,l,l')\P(l'|l)$. For example, define \[\P(l'|l)=\text{Uniform}_{l'}\{l-1,l,l+1\}.\] Now put $\theta=(m,n,l,l',f)$ and $\theta'=(m',n',l',l,f)$ and define the integers residing in a circle centered about the $l/l'$-rescaled $(m,n)$ as \[\Theta'=\left\{(c,d)\in\N_{\ge0}^2: c<d, \sqrt{(c-\frac{l}{l'}m)^2+(d-\frac{l}{l'}n)^2}\le f\right\}\] and \[\Theta=\left\{(c,d)\in\N_{\ge0}^2: c<d, \sqrt{(c-\frac{l'}{l}m')^2+(d-\frac{l'}{l}n')^2}\le f\right\}.\] Note that $|\Theta|\approx|\Theta'|\approx\pi f^2$. Now consider uniform proposal density \[\P(m',n'|m,n,l,l',f)=\frac{1}{|\Theta'|}\approx\frac{1}{\pi f^2}\] where $f$ is the uniform search distance on the points. The size of $\Theta$ is not necessarily the same as $\Theta'$ and hence the proposal density is not symmetric. Therefore $|\Theta|$ and $|\Theta'|$ are used each iteration of the sampler. The acceptance probability is given by \begin{align*}\P((m,n,l)\rightarrow(m',n',l'))&=\min\{1,\frac{L(m',n',l'|\mathbf{X})\P(m',n',l')\P(m,n,l|m',n',l')}{L(m,n,l|\mathbf{X})\P(m,n,l)\P(m',n',l'|m,n,l)}\}\\&=\min\{1,\frac{L(m',n',l'|\mathbf{X})\P(m',n',l')|\Theta'|}{L(m,n,l|\mathbf{X})\P(m,n,l)|\Theta|}\}.
\end{align*} Let $\Upsilon$ be the space of possible parameter values, i.e., \[\Upsilon = \{(m,n,l)\in\N_{\ge0}\times\N_{\ge1}\times\N_{\ge1}: m<n, lm\le a, ln\ge b\}. \] The set $\Upsilon$ can be partitioned based on parameter regimes of dispersion into three disjoint sets $\Upsilon=\Upsilon_-\cup\Upsilon_0\cup\Upsilon_+$ corresponding to under-, equi-, and over-dispersed parameter sets. Therefore, a prior can be imposed by setting $\P(m,n,l)=\ind{\Upsilon_a}\circ(m,n,l)$ for respective $a\in\{-,0,+\}$. This essentially uses mean and variance information to impose constraints on the sampling.  

To illustrate the sampling method, we sample $z=50$ Poisson variates $\kappa=\text{Poisson}(c=114)$. The sample statistics are $(\min,\max,\mu,\sigma^2)=(93,135,\frac{5783}{50},\frac{210261}{2450})\approx(93,135,115.7,85.2)$.  Consequently, the sample is under-dispersed. The Poisson log-likelihood is $-183.518$. Now we utilize the sampling scheme. We assume a uniform prior on dispersion, i.e., $\P(m,n,l)=\ind{\Upsilon}\circ(m,n,l)$. We take $f=3$. We initialize the sampler to $(m,n,l)=(93,135,1)$ and run for 200 iterations. The maximum a-posterior sample corresponds to $(m,n,l)=(30,47,3)$, corresponding to $\kappa_{m,n}^{\circledast l}$ supported on $\{90,91,\dotsb,140,141\}$, with log-likelihood $-181.674$, mean $c=231/2=115.5$ and variance $\delta^2=323/4=80.75$. Posterior quantities are shown below in Figure~\ref{fig:mh}. The traces of $(m,n,l)$ are shown in Figures~\ref{fig:mh}(a) and (b), showing good exploration of the space. The log-likelhood is shown in Figure~\ref{fig:mh}(c), where the sampled models generally exhibit higher likelihood than the true Poisson law. The densities of the true Poisson law, a smooth kernel histogram estimate, and the dice MAP law are shown below in Figure~\ref{fig:mh}(d). For reference, the histogram estimate has log-likelihood $-178.181$. These results highlight the utility of the representation of $\kappa$ by $\kappa_{m,n}^{\circledast l}$: by using distributions having bounded support, higher quality fits to finite data are observed in comparison to using exact laws defined on infinite intervals. By construction, the histogram estimate is sculpted to the data, having the highest likelihood. The dice represents an intermediate between the (empirical) smooth kernel histogram and the Poisson law. 

\begin{figure}[h!]
\centering
\begingroup
\captionsetup[subfigure]{width=3in,font=normalsize}
\subfloat[$(m,n)$]{\includegraphics[width=3in]{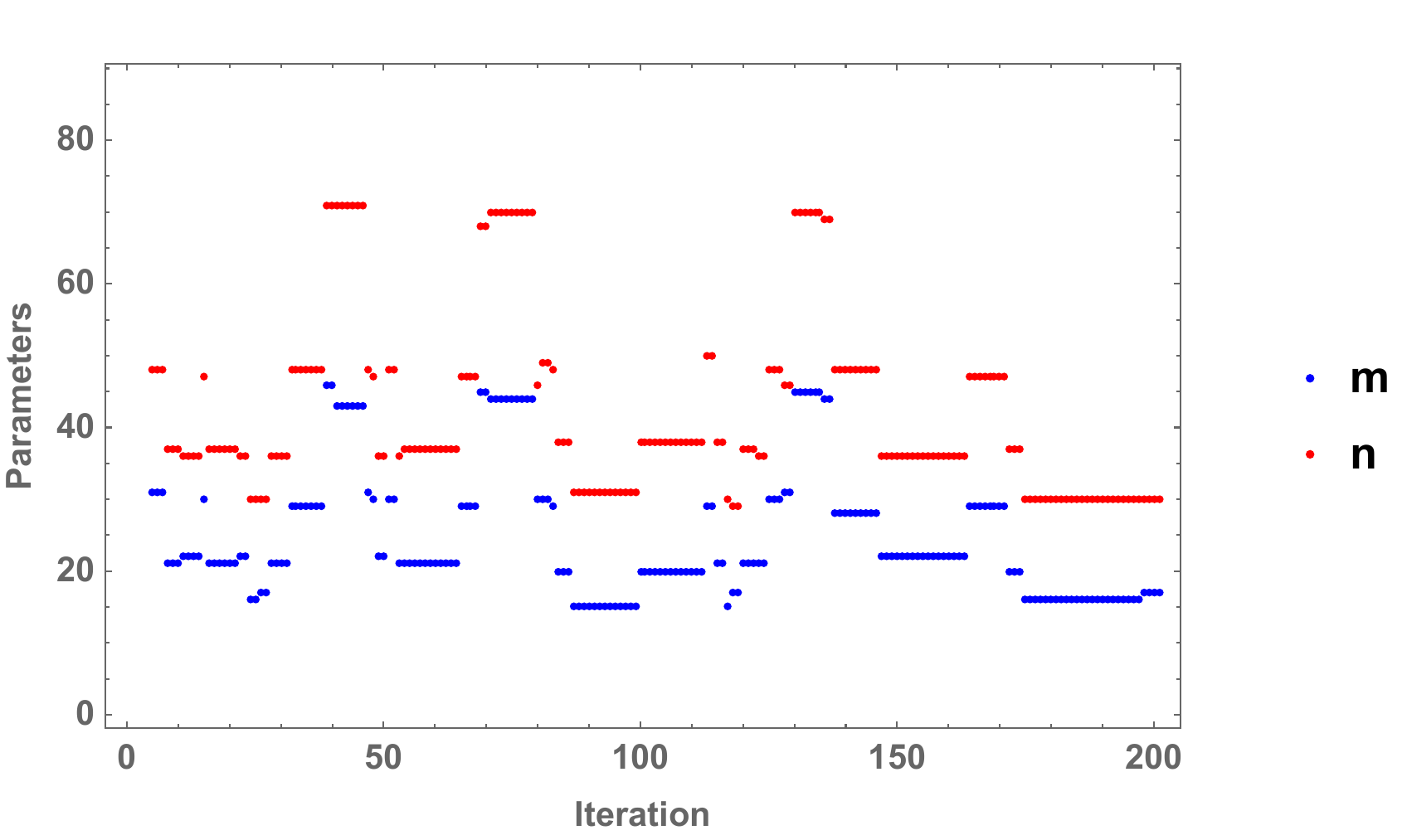}}
\subfloat[$l$]{\includegraphics[width=2.5in]{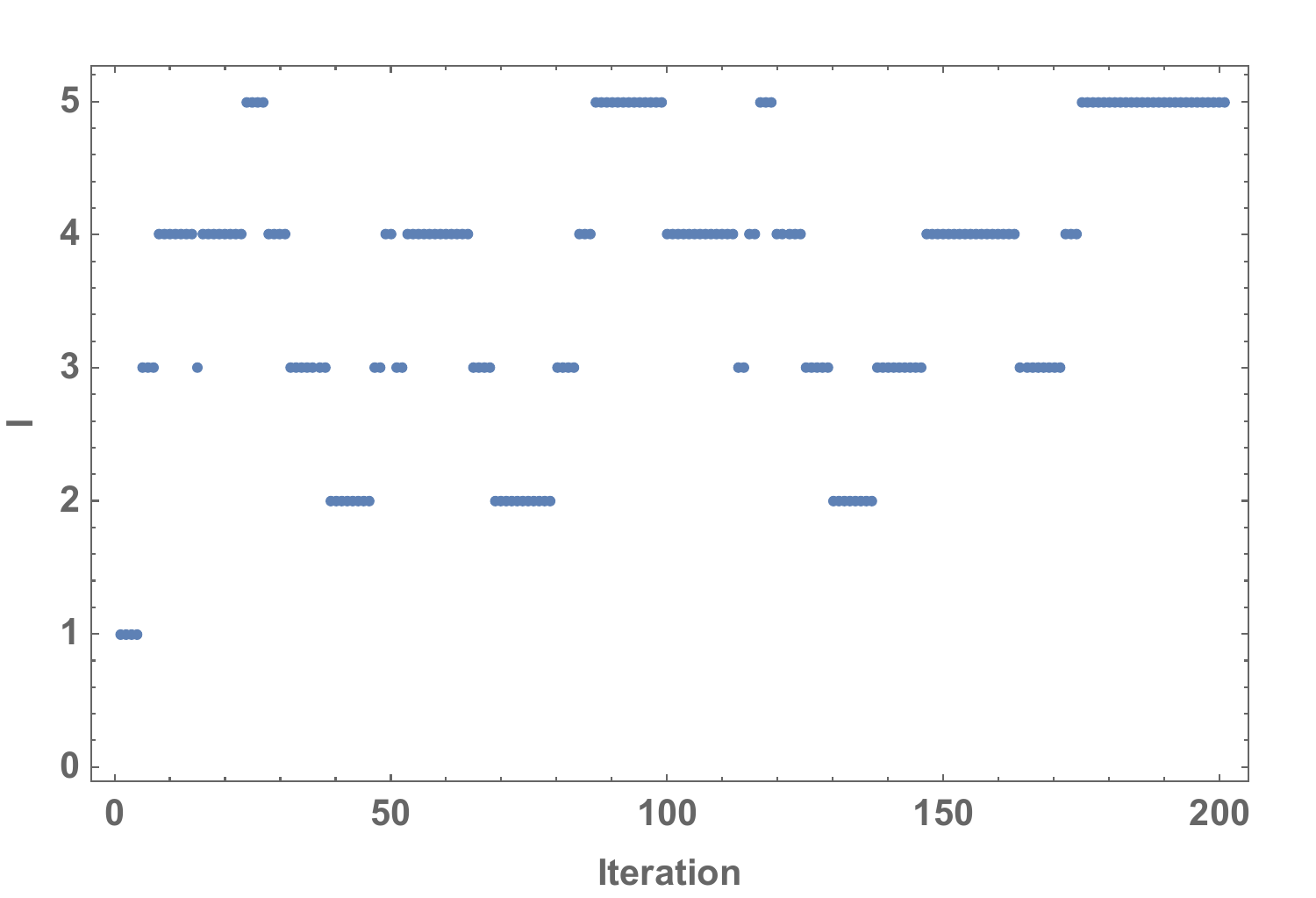}}\\
\subfloat[Log-likelihood]{\includegraphics[width=3in]{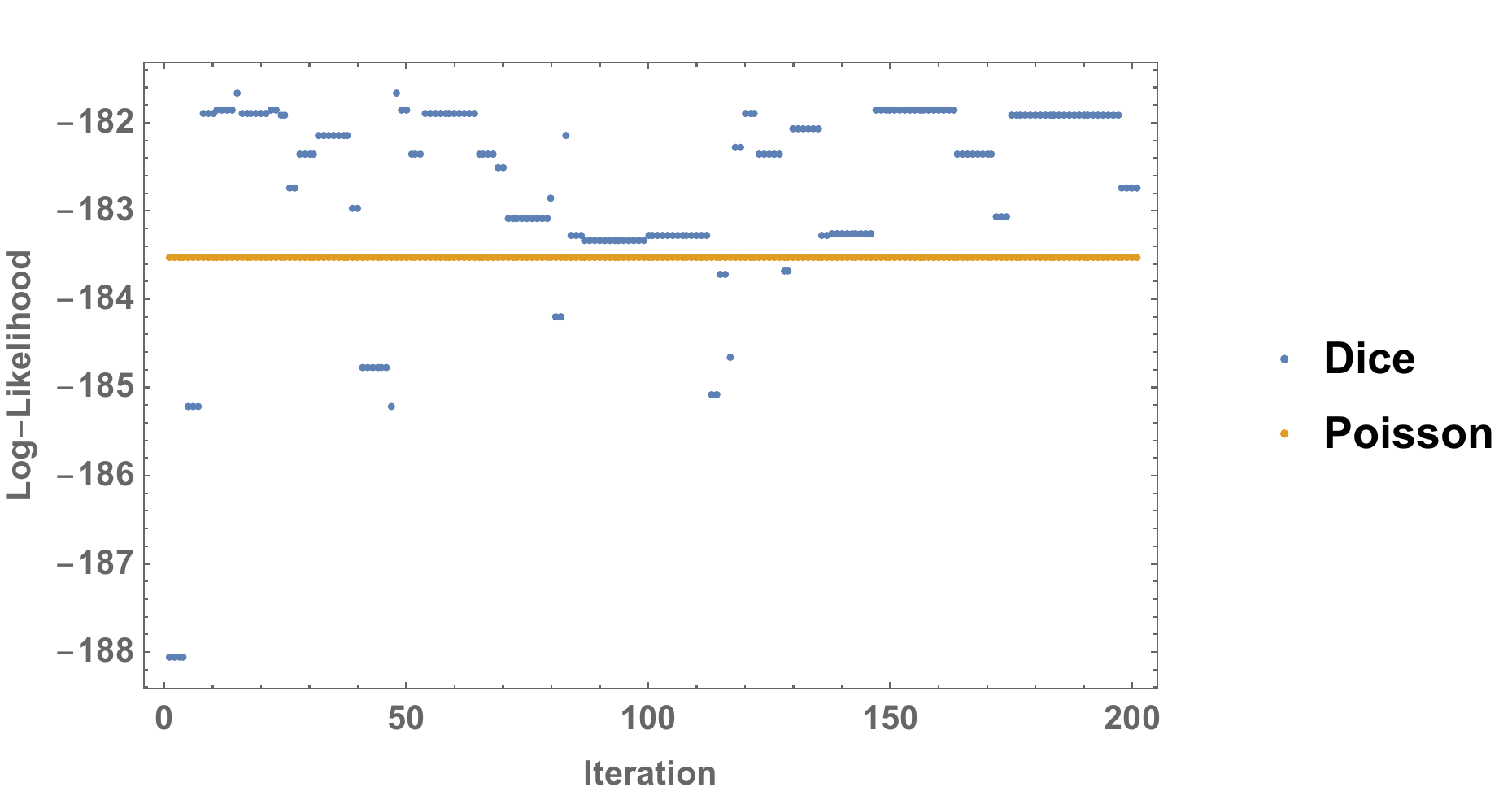}}\subfloat[Density]{\includegraphics[width=3in]{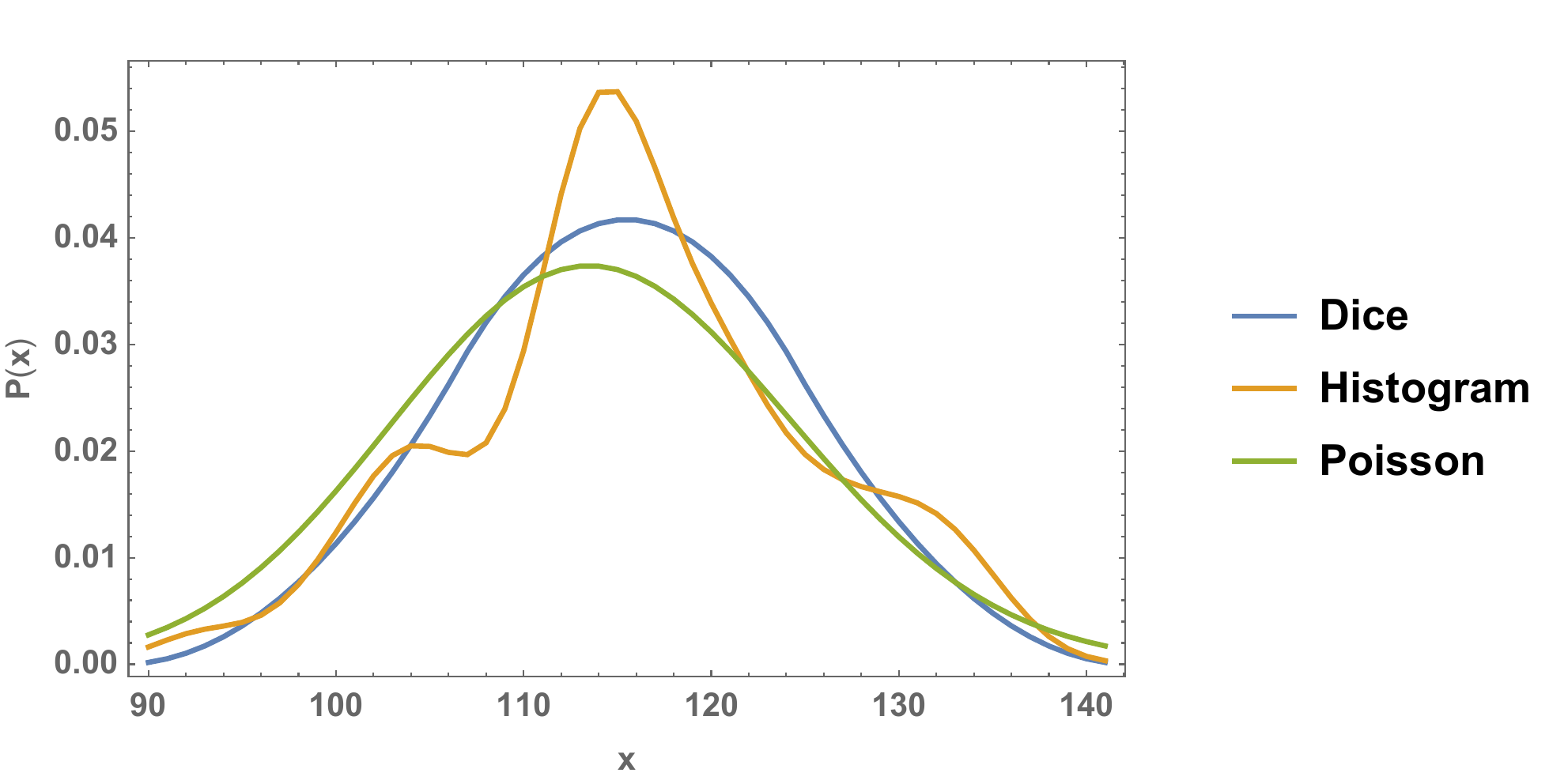}}\\
\endgroup
\caption{Trace and density plots from Metropolis-Hastings sampler}\label{fig:mh}
\end{figure}

\FloatBarrier

More generally, here for convolution order $l$ fixed, we can consider convolutions of dice $\kappa_{m_1,n_1},\dotsb,\kappa_{m_l,n_l}$ for support sequences $m=(m_1,\dotsb,m_l)$ and $n=(n_1,\dotsb,n_l)$ as \[\kappa_{m,n}=\kappa_{m_1,n_1}\circledast\dotsb\circledast\kappa_{m_l,n_l}.\] Then $K\sim\kappa_{m,n}$ has pgf $\psi_{m,n}(t)=\psi_{m_1,n_1}(t)\dotsb\psi_{m_l,n_l}(t)$. Defining $|m|=m_1+\dotsb+m_l$ and $|n|=n_1+\dotsb+n_l$, we obtain support $\{|m|,\dotsb,|n|\}$. Thus, the likelihood becomes \[L(m,n|\mathbf{K})=\prod_{i=1}^z[t^{K_i}]\psi_{m,n}(t).\] This gives MLE \[(\hat{m},\hat{n})=\argmax_{(m,n): |m|\le a, |n|\ge b}L(m,n|\mathbf{K}).\]

\subsection{Stochastic processes}
 
 In the following results, we show construction of canonical stochastic processes---Wiener and geometric Brownian motion---from the orthogonal dice. 
 
 \begin{samepage}
 \begin{thm}[Wiener Process] \label{thm:wiener} Let $N_k=(\kappa_{m_k,n_k},\nu_k)$ be an orthogonal die random measure on $(E,\mathscr{E})=(\R_{\ge0},\mathscr{B}_{\R_{\ge0}})$ for $k\in\mathcal{I}=\N_{\ge1}\setminus 3\N_{\ge1}$ with $c_k=(m_k+n_k)/2$ and $\nu_k=\text{Uniform}[0,T_k=n_k-m_k+1]$ and let $N_k(t) = N_k([0,t])$. Then $N_k(t)$ has mean and covariance \[\E N_k(t) = \frac{c_k}{T_k}t,\quad \C(N_k(s),N_k(t)) =\frac{c_k}{T_k}(s\wedge t)\] and $M_k=(M_k(t))$ in Skorokhod space $\mathcal{D}[0,T_k]$ defined by $M_k(t)=(N_k(t)-c_kt/T_k)/\sqrt{c_k/T_k}$ is a locally $L^2$-bounded martingale with quadratic variation $\langle M_k\rangle_t =\E M_k^2(t)=t$ such that $M_k=(M_k(t))$ converges in distribution in $\mathcal{D}[0,\infty)$ to the Wiener process $W=(W_t)$ as $k\rightarrow\infty$. 
 \end{thm}
 \end{samepage}
 \begin{proof} The first claim follows from the statistics of $N$, c.f., Proposition~\ref{prop:orthogonaldie}.  For the second, the strategy is to show that $M_t$ is a locally $L^2$-bounded martingale with quadratic variation $\langle M\rangle_t = t$ and apply Rebolledo’s theorem \citep{rebolledo} to yield weak convergence in Skorokhod space (see the appendix for adapted version of Rebolledo's theorem). Note that  \[\E N_t^2 = \frac{ct}{T} + \frac{c^2t^2}{T^2}\] such that \begin{align*}\E M_t^2 &= \E(N_t - ct/T)^2/(c/T) \\&= \E(N_t^2 - 2\frac{ct}{T}N_t + \frac{c^2t^2}{T^2})/(c/T) \\&=(ct/T + c^2t^2/T^2 - 2c^2t^2/T^2+c^2t^2/T^2)/(c/T)=t.\end{align*} Thus $M$ is locally $L^2$ bounded martingale. We have that \begin{align*}[M]_t\equiv [ M,M]_t &= \sum_{s\le t}(\Delta M_s)^2=\sum_{s\le t}(\sqrt{\frac{T}{c}}\Delta N_s)^2=\frac{T}{c}[N,N]_t = \frac{T}{c}N_t \end{align*} with compensator (predictable quadratic variation) $\langle M\rangle_t \equiv\langle M,M\rangle_t = t =\E M_t^2$. The jumps of $M$ are of size $\sqrt{T/c}$, which goes to zero as $m,n\rightarrow\infty$ so that the jumps disappear in the limit (recall quadratic $m_k=(k^2-1)/3$ and $n_k=m_k+2k+2$ and linear $T_k=n_k-m_k-1=2k+3$ for $k\in\N_{\ge1}\setminus 3\N_{\ge1}$). Thus Rebolledo's theorem applies and $M=(M_t)$ converges in distribution in $\mathcal{D}[0,\infty)$ as $k\rightarrow\infty$ ($m,n\rightarrow\infty$) to the Wiener process $W=(W_t)$. \end{proof}

 In Figure~\ref{fig:wiener} we show the Wiener construction for the $j=1000$ orthogonal die located at $k=1499$, yielding $M_t$ on time-set $[0,3001]$.  
\begin{figure}[h]
\centering
\includegraphics[width=4.5in]{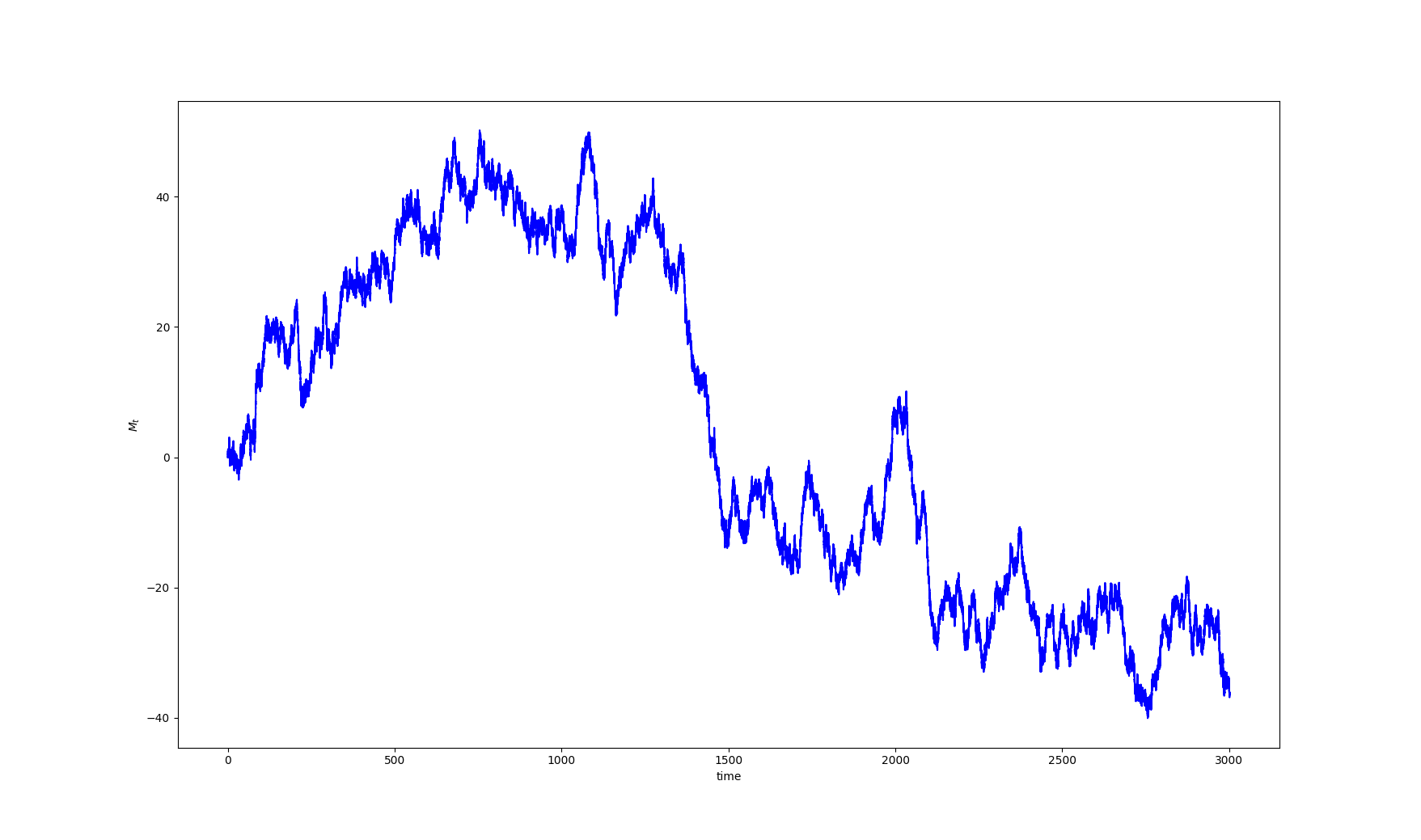}
\caption{Convergence: Orthogonal die construction of the Wiener Process}\label{fig:wiener}
\end{figure}
\FloatBarrier
 
A similar method can be applied working with logarithmic transformations, yielding a corollary on construction of geometric Brownian motion. 
 \begin{samepage}
 \begin{cor}[Geometric Brownian Motion] Consider the set-up of Theorem~\ref{thm:wiener} with $M_k=(M_k(t))$ in Skorokhod space $\mathcal{D}[0,T_k]$. Let $\mu\in\R$ and $\sigma\in\R_{\ge0}$ and define \[R_k(t)= \exp[\sigma M_k(t)-\tfrac{1}{2}\sigma^2t],\quad Y_k(t) = Y_0\exp[\mu t]R_k(t)=Y_0\exp[\sigma M_k(t) + (\mu-\tfrac{1}{2}\sigma^2)t].\] Then, $R_k(t)$ is a martingale and $Y_k=(Y_k(t))$ converges in distribution in $\mathcal{D}[0,\infty)$ as $k\rightarrow\infty$ to the geometric Brownian motion $Z=(Z_t)$ with mean and covariance 
\[\E Z_t = Y_0 \exp[\mu t],\quad \Cov(Z_s,Z_t) = Y_0^2 \exp[\mu(s+t)](\exp[\sigma^2(s\wedge t)]-1).\]
 \end{cor}
 \end{samepage}

Similarly, stochastic processes may be constructed from the negative and positive dice. We leave this to future work.

\subsection{Traffic flows}
The random counting measure $N=(\kappa,\nu)$ describes a system of countable independent points. Thus it is appropriate for modeling independent data generated according to law $\nu$. Consider the application of traffic counts. Let $E=[0,1]$ be a finite stretch of road lane conveying free-flowing traffic. Assume the road lane has capacity maximum of $n$ vehicles before physically exhausting the space with vehicles. If we assume the minimum number of vehicles conveyed is zero, $m=0$, then we obtain a counting distribution supported on $\{0,1,\dotsb,n\}$. Let $K\sim\kappa_{m,n}$ be the number of vehicles conveyed on the road, and let $\mathbf{X}=\{X_i:i=1,\dotsb,K\}$ be the locations of the vehicles on the road lane in $[0,1]$, independently identically drawn $X_i\sim\nu=\text{Uniform}[0,1]$. The pair $(\kappa_{m,n},\nu)$ forms a random counting measure $N$ on $(E,\mathscr{E})$. Letting $A=[0,x]$ and $B=(x,1]$ be a partition of the road lane into two parts, we obtain covariance of the number of vehicles in the parts $N(A)$ and $N(B)$ as \[\Cov(N(A),N(B))=\frac{n(n-4)}{12}x(1-x).\] Thus if $n=4$, the counts on the road parts are de-correlated, whereas if $n<4$ they are negatively correlated and otherwise $n>4$, they are positively correlated. The correlation is maximized with $x=1/2$, corresponding to equal sized road parts. Thus, the length scale of a segment of road that is sometimes unoccupied (uniformly at random) and exhibits de-correlated counts is precisely four vehicles in length, conveying Poisson-like flows. If the maximum number of vehicles is greater than four, such as for large intersections, then the vehicle count is over-dispersed and the counts in distinct road parts exhibit positive covariance, similar to the negative binomial distribution. As the Poisson-type distributions (binomial, Poisson, negative binomial) are commonly encountered in traffic flows \citep{traffic,traffic2}, their dice counterparts seem naturally suitable for their bounded realities.

These ideas carry over to marked random measures. Consider dice random measure $N=(\kappa_{m,n},\nu\times\pi)$ on $([0,1]\times\R_{\ge0},\mathscr{B}_{[0,1]}\otimes\mathscr{B}_{\R_{\ge0}})$, where each vehicle $i$ at location $X_i$ is independently marked with a velocity $V_i$ according to law $\pi$ with mean $a$ and variance $b^2$. Consider test function $f_A(x,y)=y\ind{A}(x)$ for $A\in\mathscr{E}$. Then $Nf_A$ is the sum of the velocities of the vehicles in road part $A$. We have mean-covariance\begin{align*}\E Nf_A &=\frac{m+n}{2}\nu(A)a\\\Cov(Nf_A,Nf_B)&=\frac{m+n}{2}\nu(A\cap B)(a^2+b^2) + \left(\frac{1}{12} \left(m^2-2 m (n+4)+(n-4) n\right)\right)\nu(A)\nu(B)a^2. \end{align*} Therefore, \[A=[0,x],\quad B=(x,1]\quad\Rightarrow\quad\Cov(Nf_A,Nf_B)=\left(\frac{1}{12} \left(m^2-2 m (n+4)+(n-4) n\right)\right)x(1-x)a^2,\] having similar structure to the non-marked representation. Now consider for vehicle $i$ its location at time $t$, $L_i(t)=X_i+V_it$. Setting $f_{A}(x,y)=(x+ty)\ind{A}(x)$, we have mean-covariance of the random sum of locations  for disjoint parts as \begin{align*}\E Nf_A &=\frac{m+n}{2}(\E X\ind{A}+a\nu(A) t)\\\text{for disjoint }A\text{ and }B&\text{ in }\mathscr{E}:\\\Cov(Nf_A,Nf_B)&=\left(\frac{1}{12} \left(m^2-2 m (n+4)+(n-4) n\right)\right)(\E X\ind{A}+a\nu(A)t)(\E X\ind{B}+a\nu(B)t).\end{align*} Taking $A=[0,x]$ and $B=(x,1]$, we obtain \[\Cov(Nf_A,Nf_B)=\left(\frac{1}{12} \left(m^2-2 m (n+4)+(n-4) n\right)\right)(\frac{x^2}{2}+axt)(\frac{1}{2}-\frac{x^2}{2}+a(1-x)t).\] These findings underscore the mean-covariance structure of mixed binomial processes formed by dice.

A generalization is consider the product random measure $M=N\times N$ on the unit square $[0,1]\times[0,1]$, where $N=(\kappa_{m,n},\nu)$ is a random measure on $([0,1],\mathscr{B}_{[0,1]})$ with $\kappa_{m,n}\in\mathcal{K}$, formed as \[Mf = \int_{[0,1]\times[0,1]}M(\D x,\D y)f(x,y) = \sum_i^K\sum_j^Kf\circ (X_i,X_j)\] where $K\sim\kappa_{m,n}$ with mean $c$ and variance $\delta^2$. By Theorem~\ref{thm:product}, we obtain \[\E Mf = c(\nu\times I)f + (c^2+\delta^2-c)(\nu\times\nu)f\for f\in(\mathscr{B}_{[0,1]\times[0,1]})_{\ge0}.\] Suppose we wanted to know the number of pairs of distinct vehicles that are within some distance $\varepsilon$ of each other. Therefore $f(x,y)=\ind{}(|x-y|\le\varepsilon)\ind{}(x\ne y)$. We obtain mean number as \[\E Mf = (c^2+\delta^2-c)(\nu\times\nu)f = (c^2+\delta^2-c)(2\varepsilon-\varepsilon^2).\] Hence $2\varepsilon-\varepsilon^2$ is the fraction of the mean number of pairs of distinct vehicles (numbering $\E K^2-\E K = c^2+\delta^2-c$) that are within $\varepsilon$ of each other. 

\subsection{Self-similarity, sparse lattices of naturals, and Monte Carlo} In the last application we briefly discuss self-similarity of the orthogonal dice, c.f. Theorem~\ref{thm:selfsimilar}: any orthogonal die of composite size is equivalent to component dice of coprime sizes. For example, consider the first composite die, $\kappa_{m,n}$ for $m=85$ and $n=119$, having size $n-m+1=35=5\times 7$. Thus $\kappa_{m,n}$ may be represented in terms of the first two (primitive) dice $\kappa_{0,4}$ and $\kappa_{1,7}$, or equivalently, for $K_{35}\sim\kappa_{m,n}$, $K_5\sim\kappa_{0,4}$ and $K_7\sim\kappa_{1,7}$, we have $K_{35}\Leftrightarrow(K_5,K_7)$, shown below in Table~\ref{tab:decomp}.

\begin{table}[h!]
\begin{center}
\begin{tabular}{ccc|ccc|ccc|ccc|ccc|}
\toprule
$K_5$ & $K_7$ &$K_{35}$ & $K_5$ & $K_7$ &$K_{35}$ & $K_5$ & $K_7$ &$K_{35}$ & $K_5$ & $K_7$ &$K_{35}$ &$K_5$ & $K_7$ &$K_{35}$  \\\midrule
0& 1& 85&1& 2& 86& 2& 3& 87& 3& 4& 88& 4& 5& 89\\
0& 6&   90& 1& 7& 91&2& 1& 92&3& 2& 93&4& 3& 94\\
0& 4& 95 &1&  5& 96&2& 6& 97&3& 7& 98&4& 1& 99\\
0& 2& 100&1& 3&   101&2& 4& 102&3& 5& 103&4& 6& 104\\
0& 7& 105&1& 1& 106&2& 2& 107&3& 3& 108&4& 4& 109\\
0& 5& 110&1& 6&111&2& 7& 112&3& 1& 113&4& 2& 114\\
0& 3& 115&1& 4&  116&2& 5& 117&3& 6& 118&4& 7& 119\\
\bottomrule
\end{tabular}\caption{Decomposition of $\kappa_{m,n}$ for $m=85$ and $n=119$}\label{tab:decomp}
\end{center}
\end{table}

Let the table form the array $\mathbf{K}=\{(K_5,K_7,K_{35})\}$ with size $|\mathbf{K}|=35$. Each column (marginal) has uniform distribution $\kappa_{0,4},\kappa_{1,7},\kappa_{85,119}$. The first two columns are mutually independent, and the third column is dependent upon the first two. Thus the self-similarity property naturally, sparsely, and uniformly `fills' the space $F=\{0,\dotsb,4\}\times\{1,\dotsb,7\}\times\{85,\dotsb,119\}$ of size $35^2$ using only $35$ points, a square root reduction. Note that $\Var K_5 = 2$, $\Var K_7=4$, $\Var K_{35}=102$, $\Cov(K_5,K_7)=0$, $\Cov(K_5,K_{35})=\Var K_{5}=2$, and $\Cov(K_7,K_{35})=\Var K_{7}=4$. This gives mean vector $\mu$ and covariance $\Sigma$ and correlation $\rho$ matrices, noting $\mu=\text{diag}(\Sigma)$ by construction: \[\mu=\begin{bmatrix}2&4&102
\end{bmatrix},\quad\Sigma =\begin{bmatrix}2 & 0 & 2\\0 & 4 & 4\\2 & 4 & 102\end{bmatrix},\quad\rho =\begin{bmatrix}1 & 0 & \frac{1}{\sqrt{51}}\\0 & 1 & \sqrt{\frac{2}{51}}\\\frac{1}{\sqrt{51}} & \sqrt{\frac{2}{51}} & 1\end{bmatrix}.\] Putting $\kappa=\text{Uniform}(F)$ and $\kappa'=\text{Uniform}(\mathbf{K})$, we compute mean values of various functions below in Table~\ref{tab:decomp2}. Notice that additive functions $f(x,y,z)=g(x)+h(y)+j(z)$ are exactly integrated using $\kappa'$ following the exact uniform marginals of $\kappa'$. Also general functions in the first two coordinates are exactly integrated, following from the independent uniform representation of the first two coordinates. Thus, we have exact representation for functions of the form $f(x,y,z)=g(x,y)+h(z)$. 

\begin{table}[h!]
\begin{center}
\begin{tabular}{lll}
\toprule
Function $f(x,y,z)$ & $\kappa(f)$ & $\kappa'(f)$\\\midrule
$x+y+z$ & 108 & 108\\
$\sqrt{x+y+z}$ & $10.380\dotsb$ & $10.378\dotsb$\\
$xy+z$ & 110 & 110\\
$x^2+y^2+z^2$ & 10\,532 & 10\,532\\
$(x+y+z)^2$ & 11\,772 & 11\,784\\
$(x-y)^2(z-xy)^2$ & 956 & 964\\
\bottomrule
\end{tabular}\caption{Exact and estimated average values of various functions on the cube $F=\{0,\dotsb,4\}\times\{1,\dotsb,7\}\times\{85,\dotsb,119\}$ where $\kappa=\text{Uniform}(F)$ and $\kappa'=\text{Uniform}(\mathbf{K})$}\label{tab:decomp2}
\end{center}
\end{table} 

Scatter plots of $\mathbf{K}$ are shown below in Figure~\ref{fig:scatter}, illustrating the uniform structure.

\begin{figure}[h!]
\centering
\begingroup
\captionsetup[subfigure]{width=3in,font=normalsize}
\subfloat[$(K_5,K_7)$]{\includegraphics[width=3in]{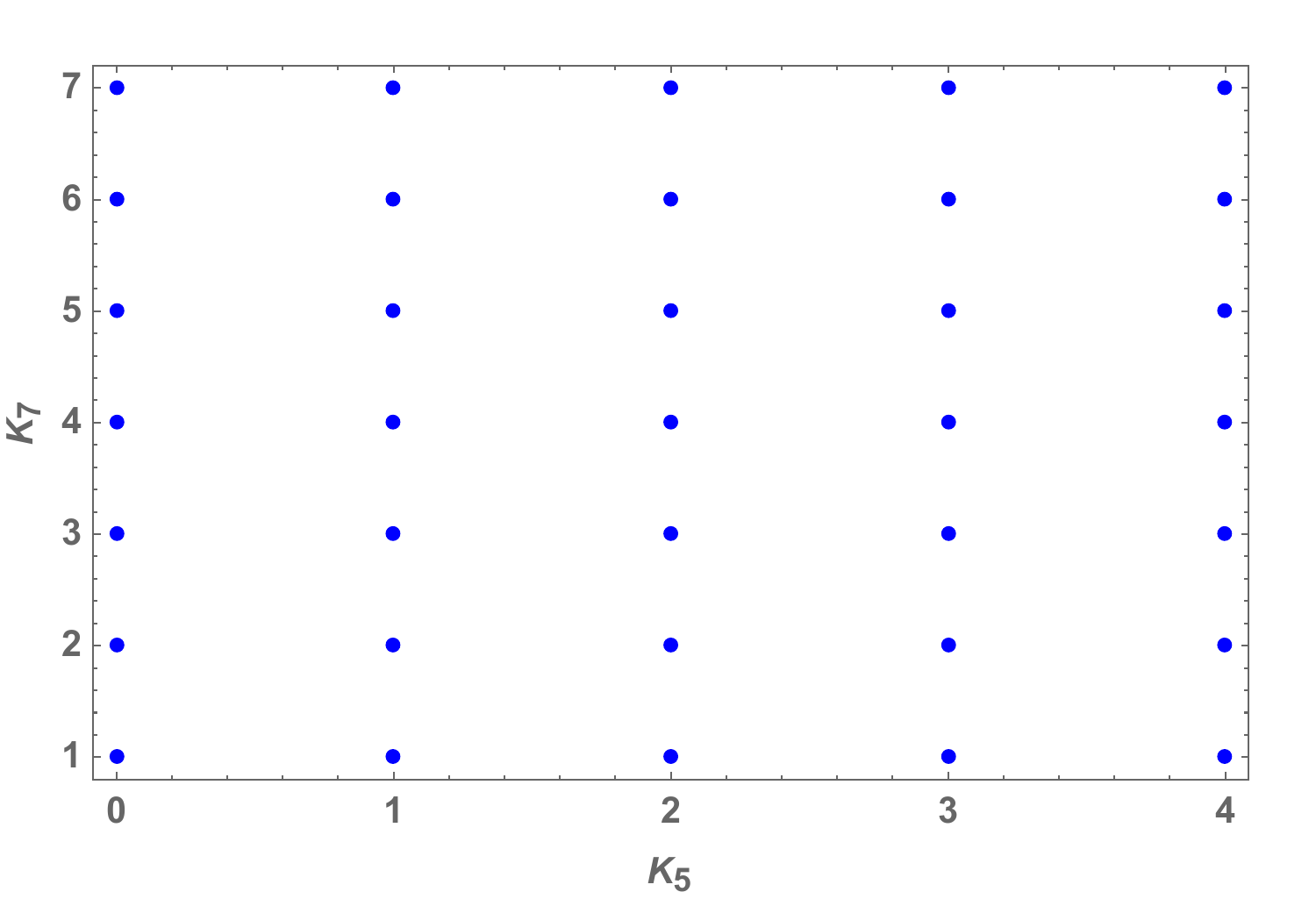}}
\subfloat[$(K_5,K_{35})$]{\includegraphics[width=3in]{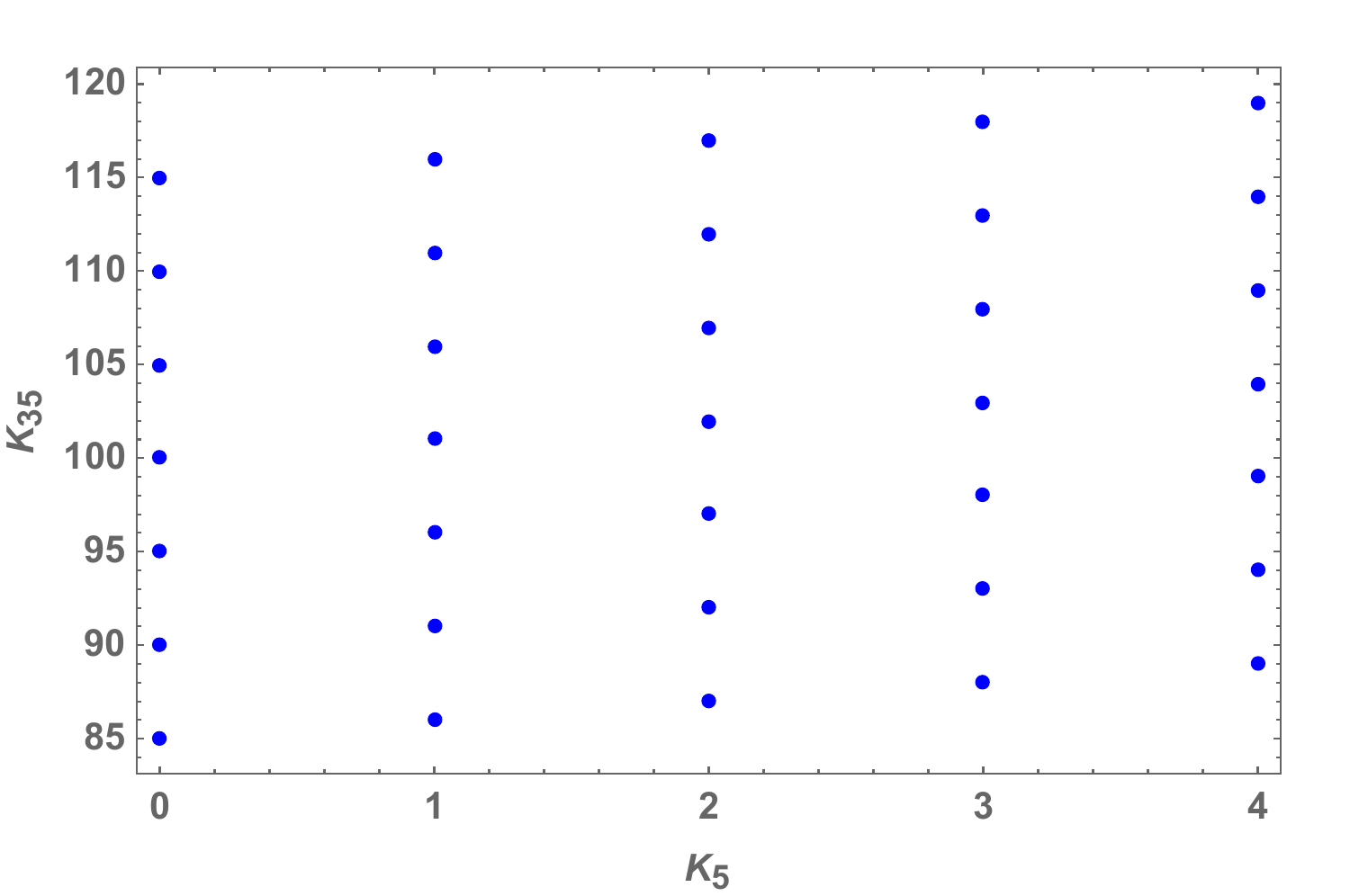}}\\
\subfloat[$(K_7,K_{35})$]{\includegraphics[width=3in]{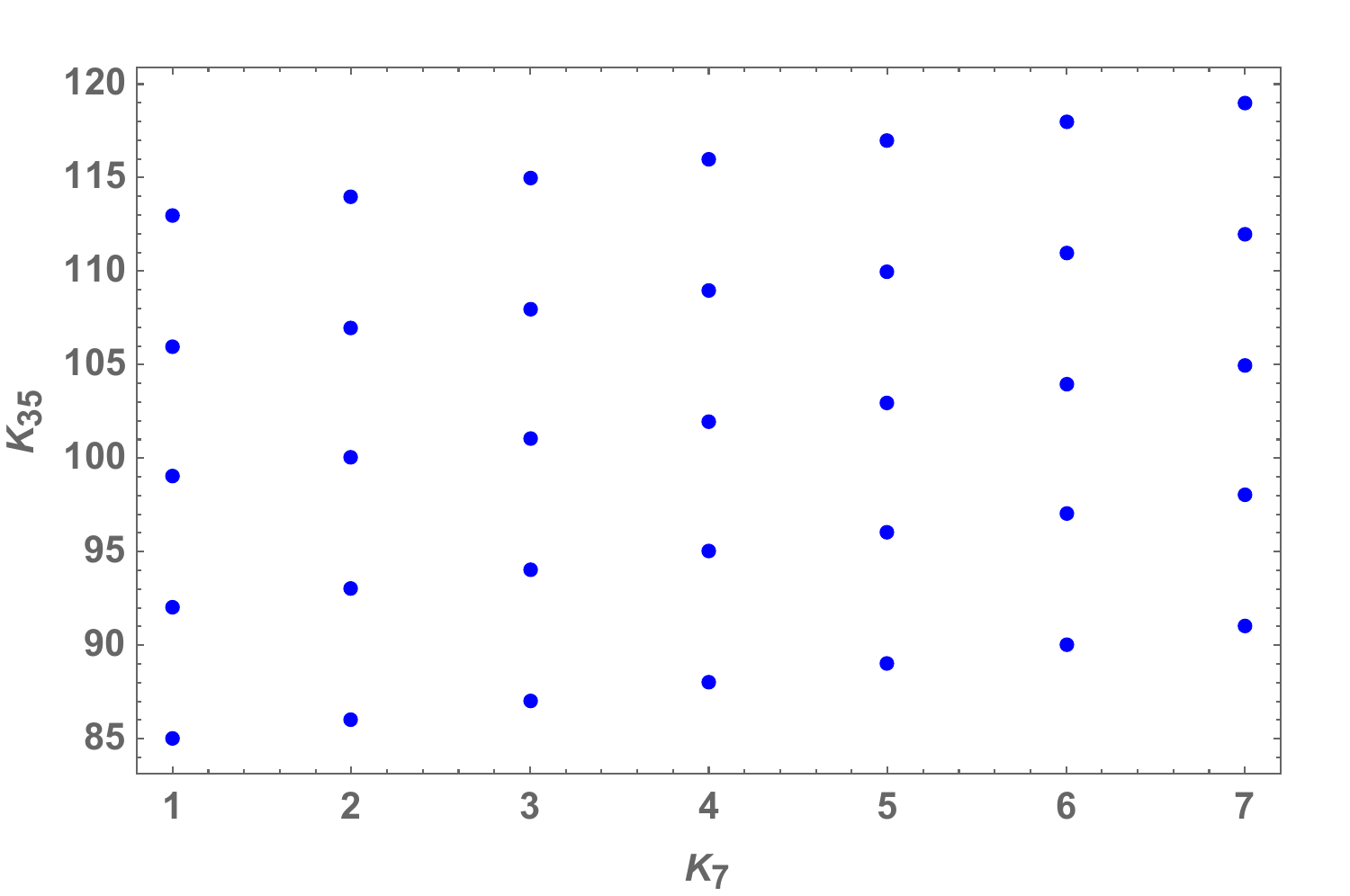}}
\subfloat[$(K_5,K_7,K_{35})$]{\includegraphics[width=3in]{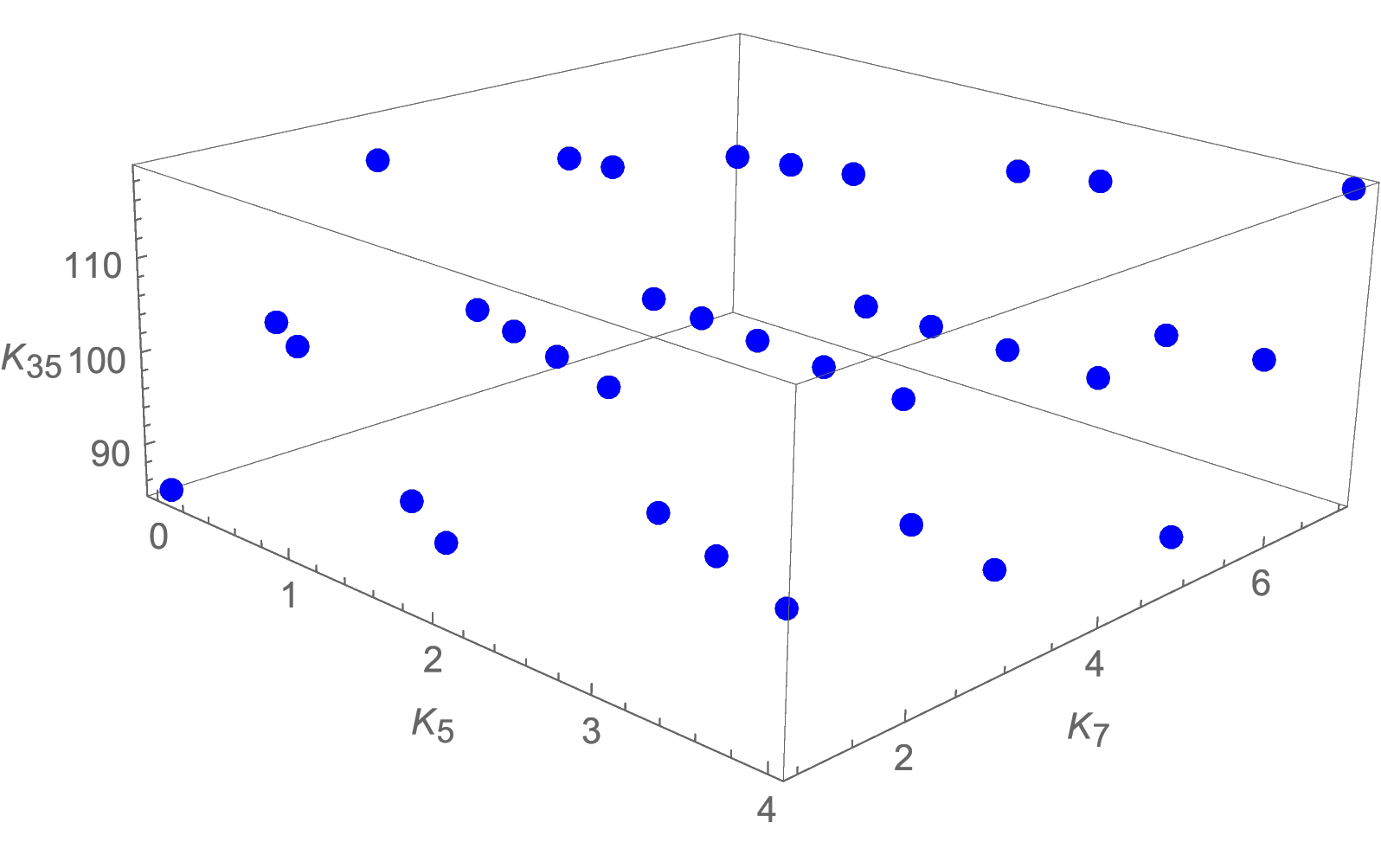}}\\
\endgroup
\caption{Scatter plots of $\mathbf{K}=\{(K_5,K_7,K_{35})\}$ for $m=85$ and $n=119$}\label{fig:scatter}
\end{figure}

\FloatBarrier

In this framework, the coordinates used in Monte Carlo integration differ in the sizes of their grids—by at least a factor of two—which is an inherent feature of the construction. Nevertheless, these variates can be rescaled to lie within the unit interval. In general, a space of size 
$l$, where is coprime to both two and three, is represented using approximately $\sqrt{l}$ points, also coprime to two and three. The full space is thus organized as a square composed of such coprime factors. The final coordinate, referred to as the long coordinate, is defined as the product of the sizes of the preceding short coordinates. It is therefore natural to assign the long coordinate to a uniform variable, such as time or spatial location, while the short coordinates may be used to represent covariates. This construction yields a sparse lattice on the natural numbers that is well-suited for Monte Carlo integration in contexts such as time-series models with time-varying covariates.

Consider the joint representation $(K_1,\dotsb,K_l,K)$ as \[(K_1,\dotsb,K_l)\sim\nu=\times_{i=1}^l\kappa_{m_i,n_i}\] with \[Q(x=(x_1,\dotsb,x_l),y)=\delta_{\text{CRT}(x|b)}(y).\] Then the joint representation is distributed \[(K_1,\dotsb,K_l,K)\sim\mu=\nu\times Q.\] Taking projection mapping $f(x,y)=y$ for the marginal, the marginal law is the composite die, $\mu\circ f^{-1}=\kappa_{m,n}$. The product marginal law is given by \[\mu_{m,n}=\times_{i=1}^l\kappa_{m_i,n_i}\times\kappa_{m,n}.\] Therefore, for all additive functions $f(x,y)=g(x)+h(y)$, we have $\mu(f)=\mu_{m,n}(f)$. 
 
  \section{Discussion and Conclusions}\label{sec:dis} 

  Discrete rectangular uniform distributions—referred to as ``dice''—provide a natural class of models for bounded count data. We classify these distributions into three infinite families based on their dispersion properties: negative, orthogonal, and positive. Orthogonal dice, defined by the equality of mean and variance, lie on the boundary between under- and over-dispersed cases and serve as a reference point in this classification.

We show that there are infinitely many orthogonal dice and examine their probabilistic and arithmetic properties. In particular, their associated random measures converge to the Poisson random measure and can be used to construct orthogonal random measures and Poisson superpositions. In the scaling limit, these measures converge in distribution to classical stochastic processes, including Brownian motion and geometric Brownian motion in the Skorokhod space.

From an arithmetic perspective, orthogonal dice have support sizes that are coprime to two and three. They generate partitions of the natural numbers and decompose the supports of dice into subsets with specific coprimality properties. Additionally, they exhibit a form of self-similarity: dice with composite support sizes can be built from those with coprime sizes.

The variance-to-mean relationship offers a practical criterion for organizing the family of dice, with implications for modeling count data with different dispersion patterns. Since dice distributions can be convolved freely, this framework is flexible and broadly applicable to problems involving bounded discrete outcomes. Further study is warranted to explore connections with other classes of discrete distributions and to develop inference methods tailored to this family. The structure and versatility of the dice distributions suggest that they may serve as useful building blocks in both applied modeling and theoretical investigations.

\appendix

\section{Rebolledo's Theorem}\label{appn} We give Rebolledo's theorem, adapted from Appendix C of \cite{Khudabukhsh:2022aa}. 

\begin{thm}[Rebolledo's theorem] Let $M_n$ be a  locally square-integrable martingale. Consider the following two conditions: \begin{align*}\langle M_n\rangle_t&\xrightarrow{P}V(t),\quad t\in\mathcal{T}\\\langle M_n^\varepsilon\rangle_t&\xrightarrow{P}0\quad\quad \forall t\in\mathcal{T},\quad\forall\varepsilon>0\end{align*} where $V$ is a continuous deterministic function on $\mathcal{T}_0$ and $M_n^\varepsilon$ contains the jumps of $M_n$ of size exceeding $\varepsilon$. Then these conditions imply the following finite-dimensional convergence: \[(M_n(t_1),M_n(t_2),\dotsb,M_n(t_l))\xrightarrow{D}(W(t_1),W(t_2),\dotsb,W(t_l))\] as $n\rightarrow\infty$, where $W$ is a Gaussian martingale with $\langle W\rangle=V$ and $t_1,t_2,\dotsb,t_l\in\mathcal{T}$.

If, in addition, $\mathcal{T}$ is dense in $\mathcal{T}_0$, then these conditions imply the following weak convergence: \[M_n\xrightarrow{D}W\] as $n\rightarrow\infty$ in $\mathcal{D}$, the space of $\R$-valued c\`{a}dl\`{a}g functions on $\mathcal{T}_0$ endowed with the Skorokhod topology. Furthermore, $\langle M_n\rangle$ converges uniformly on compact subsets of $\mathcal{T}_0$ to $V$ in probability.
\end{thm}




\fund 
C.D.B. acknowledges support from Massive Dynamics (Princeton, New Jersey). H.R. acknowledges support from the Army Research Office (grant number W911NF-19-1-0382). G.R. acknowledges funding from the National Science Foundation (grant number DMS-1853587). 

\competing 
There were no competing interests to declare which arose during the preparation or publication process of this article.



%
%
%

\bibliographystyle{plainnat} 
\bibliography{stones3}  





\end{document}